\newcommand{\erre}{\mathds{R}}
\newcommand{\ricc}{\operatorname{Ric}}
\newcommand{\hess}{\nabla^2}
\newcommand{\ra}{\rightarrow}
\newcommand{\set}[1]{{\left\{#1\right\}}}               
\newcommand{\pa}[1]{{\left(#1\right)}}                  
\newcommand{\sq}[1]{{\left[#1\right]}}                  
\newcommand{\abs}[1]{{\left|#1\right|}}                 
\newcommand{\pair}[1]{\left\langle#1\right\rangle}      
\newcommand{\eps}{\varepsilon}                           
\renewcommand{\hat}[1]{\widehat{#1}}
\newcommand{\We}{\hat{E}}
\newtheorem{theorem}{\textbf{Theorem}}[section]
\newtheorem{ackn}{Acknowledgments\!}
\newtheorem{lemma}[theorem]{\textbf{Lemma}}
\newtheorem{proposition}[theorem]{\textbf{Proposition}}
\newtheorem{cor}[theorem]{\textbf{Corollary}}
\theoremstyle{remark}
\newtheorem{rem}[theorem]{\textbf{Remark}}
\numberwithin{equation}{section}
\title[Steady and expanding Ricci solitons]
{Classification of expanding and steady Ricci solitons \\with integral curvature decay}
\date{\today} \linespread{1.2}
\keywords{Ricci solitons, Weitzenb\"ock formula, weighted Einstein tensor, rigidity results}
\subjclass[2010]{53C20, 53C25.}
\begin{document}

\maketitle

\begin{center}
\textsc{\textmd{G. Catino\footnote{Dipartimento di Matematica, Politecnico di Milano, Piazza Leonardo da Vinci 32, Milano, Italy, 20133. Email: giovanni.catino@polimi.it. Supported
by GNAMPA project ``Equazioni di evoluzione geometriche e strutture di tipo Einstein''.}, P.
Mastrolia\footnote{Dipartimento di Matematica, Universit\`{a} degli Studi di Milano, Via Saldini 50, Milano, Italy, 20133.
Email: paolo.mastrolia@gmail.com. Supported
by GNAMPA project ``Analisi Globale ed Operatori Degeneri''.} and D. D. Monticelli\footnote{Dipartimento di Matematica, Universit\`{a} degli Studi di Milano, Via Saldini 50, Milano, Italy, 20133.
Email: dario.monticelli@gmail.com. Supported
by GNAMPA project ``Analisi Globale ed Operatori Degeneri''. \\ The  authors are members of the Gruppo Nazionale per
l'Analisi Matematica, la Probabilit\`{a} e le loro Applicazioni (GNAMPA)
of the Istituto Nazionale di Alta Matematica (INdAM).
} }}
\end{center}
\begin{abstract}
In this paper we prove  new classification results for nonnegatively curved gradient expanding and steady Ricci solitons in dimension three and above, under suitable integral assumptions on the scalar curvature of the underlying Riemannian manifold.  In particular we show that the only complete expanding solitons with nonnegative sectional curvature and integrable scalar curvature are quotients of the Gaussian soliton, while in the steady case we prove rigidity results under sharp integral scalar curvature decay. As a corollary, we obtain that the only three dimensional steady solitons  with  less than quadratic volume growth are quotients of $\erre\times\Sigma^{2}$, where $\Sigma^{2}$ is Hamilton's cigar.
\end{abstract}

\section{Introduction and main results}

A gradient Ricci soliton is a smooth $n$--dimensional, connected,
Riemannian manifold $M^n$ with metric $g$ satisfying
\begin{equation}\label{2}
\ricc+\nabla^2f=\lambda g
\end{equation}
for some smooth potential function $f:M^n\ra \erre$ and a real constant  $\lambda$. The soliton is called
 expanding, steady or shrinking if, respectively, $\lambda<0$, $\lambda=0$ or $\lambda>0$. When $f$ is constant, a gradient Ricci soliton is an Einstein manifold. Ricci solitons generate self-similar solutions to the Ricci flow and often arise as singularity models of the flow; therefore, it is crucial to study and classify them in order to
understand the geometry of singularities.

The two dimensional case is well understood and all complete Ricci solitons have been classified, see for instance the very recent \cite{Bernstein} and references therein. In particular, it is well known that the only gradient steady Ricci soliton with positive curvature is  Hamilton's cigar $\Sigma^{2}$, see  \cite{hamilton}.

In dimension three, due to the efforts of Ivey \cite{Ivey}, Perelman \cite{perelman1}, Ni and Wallach \cite{NiWallach} and  Cao, Chen and Zhu \cite{CCZ}, shrinking solitons have been completely classified: they are quotients of either the round sphere $\mathds{S}^{3}$, the round cylinder $\mathds{R}\times\mathds{S}^{2}$ or the shrinking Gaussian soliton $\mathds{R}^{3}$.

In the steady three dimensional case the known examples are given by quotients of $\mathds{R}^{3}$, $\mathds{R}\times\Sigma^{2}$ and the rotationally symmetric one constructed by Bryant \cite{bryant}. In the seminal paper by Brendle \cite{BrendleRot}, it was shown that Bryant soliton is the only nonflat, $k$-noncollapsed, steady soliton, proving a famous conjecture by Perelman \cite{perelman1}. It is still an open problem to classify three dimensional steady solitons which do not satisfy the $k$-noncollapsing condition; see Cao \cite{CaoHe} for an interesting result in this direction.

The case of expanding solitons is far less rigid; however, some properties and classification theorems have been proved in the recent years by various authors, see for instance \cite{PetWyPac}, \cite{ma}, \cite{PRSMathZ},  \cite{chenderuelle}, \cite{SchulzeSimon}, \cite{Chodosh}, \cite{deruelle2} and references therein.

The aim of this paper is to prove new classification results of gradient expanding and steady solitons in dimension three and above under integral assumptions on the scalar curvature. Note that  similar conditions have been considered by Deruelle \cite{deruelle} in the steady case (although the author exploits a completely different approach).

 In particular we prove the following

\begin{theorem}\label{TH_expanding}
  Let $\pa{M^n, g}$ be a complete gradient expanding Ricci soliton of dimension $n\geq 3$ with nonnegative sectional curvature. If $R \in L^1(M^{n})$, then $M^n$ is isometric to a quotient of the Gaussian soliton $\erre^n$.
\end{theorem}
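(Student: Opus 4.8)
The plan is to show that the hypotheses force $\ricc\equiv 0$, and then to upgrade this to flatness using the nonnegativity of the sectional curvature. First I would record the standard structural identities for a gradient soliton \eqref{2}: tracing gives $\Delta f = n\lambda - R$; contracting the second Bianchi identity together with \eqref{2} yields $\nabla R = 2\ricc(\nabla f,\cdot)$ and the drift equation $\Delta_f R = 2\lambda R - 2\abs{\ricc}^2$, where $\Delta_f = \Delta - \pair{\nabla f,\nabla\,\cdot}$. Since the sectional curvature is nonnegative we have $\ricc\ge 0$, hence $R\ge 0$, $\abs{\ricc}\le R$ and $\abs{\ricc}^2\le R^2$; in particular $\Delta_f R = 2\lambda R - 2\abs{\ricc}^2\le 0$, so $R$ is a nonnegative $f$-superharmonic function, and the target is to prove $R\equiv 0$.

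Next I would collect the geometric consequences of $\ricc\ge 0$ that make the integral estimates work. By Bishop--Gromov the volume is at most Euclidean, $\vol(B_r)\le C r^n$. From \eqref{2} and Hamilton's identity $R + \abs{\nabla f}^2 - 2\lambda f = \text{const}$, together with $\nabla^2 f = \lambda g - \ricc\le \lambda g$, the potential is quadratically concave and one obtains the standard expander estimate $\abs{\nabla f}\le C\pa{1 + \dist(\cdot,o)}$. The crucial point is the pointwise gradient bound coming from $\nabla R = 2\ricc(\nabla f,\cdot)$, namely $\abs{\nabla R}\le 2\abs{\ricc}\,\abs{\nabla f}\le 2R\,\abs{\nabla f}\le C R\pa{1+\dist(\cdot,o)}$, which ties $\abs{\nabla R}$ to $R$ itself and to the linear growth of the potential.

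The core of the argument is an integral (Weitzenb\"ock) estimate: I would integrate the weighted equation for the natural weighted Einstein tensor $\mathcal E$ associated with \eqref{2} against $\phi^2$, where $\phi$ is a standard cutoff equal to $1$ on $B_r$, supported in $B_{2r}$, with $\abs{\nabla\phi}\le C/r$. Integrating by parts produces a sign-definite bulk term --- a nonnegative combination of $\abs{\nabla\ricc}^2$, $\abs{\ricc}^2$ and a curvature reaction term $R_{ikjl}R_{kl}R_{ij} = \sum K_{ik}\rho_i\rho_k\ge 0$ (nonnegative precisely because the sectional curvatures $K_{ik}$ and the Ricci eigenvalues $\rho_i$ are nonnegative) --- plus boundary/error terms of the schematic form $\int \phi\,\abs{\nabla\phi}\,\abs{\nabla R}$ and $\int R\,\phi\,\abs{\nabla\phi}\,\abs{\nabla f}$. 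On the annulus $B_{2r}\setminus B_r$ one has $\abs{\nabla\phi}\,\abs{\nabla f}\le C$, so by the gradient bound both error terms are controlled by $\int_{B_{2r}\setminus B_r} R\,dV$, which tends to $0$ as $r\to\infty$ because $R\in L^1(M)$. Letting $r\to\infty$ therefore kills all boundary terms and leaves a vanishing sign-definite integral, forcing $\ricc\equiv 0$ (equivalently $R\equiv 0$). Finally, $\ricc\equiv 0$ together with nonnegative sectional curvature forces every sectional curvature to vanish, so $\pa{M^n,g}$ is flat; then \eqref{2} reduces to $\nabla^2 f = \lambda g$, whose complete flat solutions are exactly the quotients of the Gaussian soliton $\pa{\erren, \delta, \tfrac{\lambda}{2}\abs{x}^2}$, giving the conclusion.

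The main obstacle I anticipate is the integrability mismatch. The equation with the favourable sign, $\Delta_f R\le 0$, is naturally paired with the weighted measure $e^{-f}\,dV$, but for an expander $f\to-\infty$ quadratically, so $e^{-f}\,dV$ has infinite mass and one cannot simply integrate the pointwise inequality against it. The whole difficulty is thus to extract the cancellation at infinity using instead the unweighted hypothesis $R\in L^1(M,dV)$; this is exactly what the gradient estimate $\abs{\nabla R}\le C R\pa{1+\dist(\cdot,o)}$ and the linear bound on $\abs{\nabla f}$ are for, since together they convert every boundary term into a tail $\int_{B_{2r}\setminus B_r}R\,dV\to 0$. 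A secondary technical point is ensuring that the bulk integrals $\int\abs{\ricc}^2$ and $\int R^2$ converge and that the chosen weighted tensor yields a genuinely sign-definite integrand; controlling the curvature reaction term is where nonnegativity of the \emph{sectional} curvature, rather than merely the Ricci curvature, is essential.
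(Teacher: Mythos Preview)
Your overall architecture---a Weitzenb\"ock-type identity for a weighted curvature quantity, integrated against a cutoff so that every boundary term is dominated by $\int_{B_{2r}\setminus B_r}R\to 0$---is exactly the paper's strategy. However, your sketch is vague precisely at the two points where the paper does real work, and your description of the bulk term does not match either the scalar drift equation you begin with or the weighted Einstein tensor you invoke.

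First, the specific tensor the paper uses is $\hat E=(\ricc-\tfrac12 Rg)e^{-f}$; the built-in factor $e^{-f}$ is the mechanism that resolves the integrability mismatch you correctly flag. It flips the drift: one obtains $\tfrac12\Delta|\hat E|^2=|\nabla\hat E|^2-\tfrac12\langle\nabla|\hat E|^2,\nabla f\rangle-(n-2)\lambda|\hat E|^2+Q$, so the natural measure for integration is $e^{f}dV$, which is \emph{decaying} since $f\to-\infty$. Combined with $|\hat E|e^{f}\le cR$, the hypothesis $R\in L^1(dV)$ becomes exactly $|\hat E|\in L^1(e^{f}dV)$, and the error terms vanish. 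Second, the reaction term is not the simple quantity $R_{ikjl}R_{ij}R_{kl}$ you write; it is a cubic $Q$ in the \emph{traceless} Ricci and $R$, and its nonnegativity under nonnegative sectional curvature is a separate algebraic proposition (using in particular $|\ricc|^2\le\tfrac12 R^2$, not merely $\ricc\ge 0$). Two further implementation points: the paper uses that $R$ is bounded on expanders with $\ricc\ge 0$ (so the full curvature is bounded), and it takes the cutoff as a function of $f$ rather than of distance, exploiting the properness and quadratic growth of $-f$; this makes the estimates $|\nabla\varphi|\le c/\sqrt t$, $|\Delta\varphi|\le c/t$ immediate on the sublevel sets $\{-f\le t\}$.
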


\begin{theorem}\label{TH_steady}
    Let $\pa{M^n, g}$ be a complete gradient steady Ricci soliton of dimension $n\geq 3$ with nonnegative sectional curvature. Suppose that
\[
 \liminf_{r \ra +\infty } \,\, \frac{1}{r}\int_{B_{r}(o)}R = 0 .
\]
Then, $M^n$ is isometric to a quotient of $\erre^n$ or  $\erre^{n-2}\times\Sigma^{2}$, where $\Sigma^{2}$ is the cigar soliton.
\end{theorem}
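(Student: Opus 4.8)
The plan is to run a Bochner/Weitzenb\"ock argument against a compactly supported cutoff, using the integral hypothesis only to kill the boundary terms. First I record the standard steady identities obtained from \eqref{2} with $\lambda=0$: tracing gives $\Delta f=-R$, the contracted second Bianchi identity gives $\nabla R=2\ricc(\nabla f)$, and differentiating the latter yields the conservation law $R+\abs{\nabla f}^{2}=C_{0}$ for a constant $C_{0}$. Nonnegative sectional curvature forces $\ricc\ge 0$, hence $R\ge 0$, and since the Ricci eigenvalues $\mu_{i}$ are nonnegative one has $\abs{\ricc}^{2}=\sum\mu_{i}^{2}\le\pa{\sum\mu_{i}}^{2}=R^{2}\le C_{0}R$, together with $\abs{\nabla f}\le\sqrt{C_{0}}$. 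These two bounds, $\abs{\ricc}^{2}\le C_{0}R$ and $\abs{\nabla f}$ bounded, are exactly what will let me control every error term by $\tfrac1r\int_{B_{r}}R$. Finally, writing $\Delta_{f}u=\Delta u-\pair{\nabla f,\nabla u}$, one has the elliptic identity $\Delta_{f}R=-2\abs{\ricc}^{2}\le 0$; since $R\ge 0$, the strong maximum principle gives a clean dichotomy. Either $R$ vanishes somewhere and then $R\equiv 0$, in which case $\ricc\equiv 0$ and a Ricci--flat manifold with $\mathrm{sec}\ge 0$ is flat (each $\mathrm{sec}(e_{i},e_{j})\ge 0$ sums to $\mu_{i}=0$, so all sectional curvatures vanish), so $M^{n}$ is a quotient of $\erre^{n}$; this settles the first alternative. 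Or $R>0$ everywhere, the case treated below.

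So assume $R>0$. Here I would invoke a Weitzenb\"ock formula for the weighted Einstein tensor $T$ (the object built earlier in the paper). To see the structure of the computation and the difficulty, contract the soliton Ricci identity $\Delta_{f}R_{ij}=-2R_{ikjl}R^{kl}$ with $\ricc$ to get $\tfrac12\Delta_{f}\abs{\ricc}^{2}=\abs{\nabla\ricc}^{2}-2P$, where $P:=R_{ikjl}R^{kl}R^{ij}=\sum_{i\ne k}\mathrm{sec}(e_{i},e_{k})\mu_{i}\mu_{k}\ge 0$ in an eigenbasis of $\ricc$. The core move is to integrate the Weitzenb\"ock identity for $T$ against a cutoff $\eta_{k}$ with $\eta_{k}\equiv 1$ on $B_{r_{k}/2}$, $\supp\eta_{k}\subset B_{r_{k}}$, $\abs{\nabla\eta_{k}}\le C/r_{k}$, $\abs{\Delta\eta_{k}}\le C/r_{k}^{2}$, but with respect to the \emph{unweighted} measure $dV$: this is essential, since $\abs{\nabla f}$ is bounded below away from the zeros of $\nabla f$ so the weight $e^{-f}$ grows exponentially and is incompatible with the unweighted hypothesis. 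Integrating $\int\eta_{k}\Delta_{f}\abs{T}^{2}\,dV$ by parts twice reproduces the drift precisely through the term $-\int\eta_{k}\abs{T}^{2}R$ (using $\Delta f=-R$) and leaves boundary contributions $\int\abs{T}^{2}\Delta\eta_{k}+\int\abs{T}^{2}\pair{\nabla\eta_{k},\nabla f}$; by $\abs{T}^{2}\lesssim\abs{\ricc}^{2}\le C_{0}R$ and $\abs{\nabla f}\le\sqrt{C_{0}}$ these are bounded by $\tfrac{C}{r_{k}}\int_{B_{r_{k}}}R$.

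I then choose $r_{k}\to+\infty$ realizing $\liminf\tfrac1r\int_{B_{r}}R=0$, so the boundary terms vanish in the limit, and the surviving identity forces the good part of the Weitzenb\"ock formula to be identically zero on $M^{n}$. This last point is the heart of the matter and the main obstacle: the naive Ricci computation above carries the \emph{indefinite} term $-2P$, so one cannot simply read off a sign. The whole technical weight of the argument is to use the precise weighted Einstein tensor, whose Weitzenb\"ock formula rearranges the curvature contractions so that, after invoking $\mathrm{sec}\ge 0$ (and a refined Kato inequality to absorb $\abs{\nabla T}^{2}$ against $\abs{\nabla\abs{T}}^{2}$), what remains on the left is a sum of nonnegative quantities $\abs{\nabla T}^{2}$, a nonnegative curvature term, and $\tfrac12\abs{T}^{2}R$ — each of which must therefore vanish. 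Since $T$ is precisely the tensor measuring the deviation of the soliton from a rigid product, its vanishing says that $M^{n}$ is rigid: $\ricc$ has an $(n-2)$--dimensional parallel kernel, which de Rham splits off a flat $\erre^{n-2}$, while the orthogonal $2$--dimensional distribution integrates to a complete surface carrying the induced structure of a gradient steady soliton of positive curvature. By the two--dimensional classification recalled in the Introduction, that surface is Hamilton's cigar $\Sigma^{2}$, so $M^{n}$ is isometric to a quotient of $\erre^{n-2}\times\Sigma^{2}$. The remaining, more routine, checks will be that the error estimate indeed closes with the \emph{unweighted} cutoff and that the splitting and the identification of the cigar factor are compatible with passing to quotients, which is what accounts for the quotient in the statement and for why the decay hypothesis forces the Euclidean factor to be sufficiently collapsed.
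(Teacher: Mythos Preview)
There are several genuine gaps, and the scheme as written cannot close.

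\textbf{Sign of the drift and the bulk term.} The Weitzenb\"ock identity for the weighted Einstein tensor $\hat E$ in the steady case reads
\[
\tfrac12\Delta|\hat E|^{2}+\tfrac12\langle\nabla f,\nabla|\hat E|^{2}\rangle=|\nabla\hat E|^{2}+Q,
\]
i.e.\ the natural operator is $\Delta_{-f}=\Delta+\nabla_{\nabla f}$, symmetric for $e^{f}dV$, not your $\Delta_{f}=\Delta-\nabla_{\nabla f}$. If you nevertheless test against an \emph{unweighted} cutoff $\eta$ and integrate by parts, the drift produces
\[
\int_{M}\eta\big(|\nabla\hat E|^{2}+Q\big)\;-\;\tfrac12\int_{M}\eta\,|\hat E|^{2}R
\;=\;\tfrac12\int_{M}|\hat E|^{2}\Delta\eta-\tfrac12\int_{M}|\hat E|^{2}\langle\nabla\eta,\nabla f\rangle,
\]
so the term $\tfrac12|\hat E|^{2}R$ sits on the left with a \emph{minus} sign, not a plus. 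You cannot conclude that it vanishes; the inequality points the wrong way.

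\textbf{The boundary estimate does not close.} You bound the right-hand side by $\tfrac{C}{r}\int_{B_{r}}R$ via $|\hat E|^{2}\lesssim|\ricc|^{2}\le C_{0}R$. But $|\hat E|^{2}=|\ricc-\tfrac12 Rg|^{2}e^{-2f}$, and on a nonflat steady soliton $f$ is concave and unbounded below, so $e^{-2f}$ is not controlled. The quantities $\int|\hat E|^{2}|\Delta\eta|$ and $\int|\hat E|^{2}|\nabla\eta||\nabla f|$ are not bounded by $\tfrac{C}{r}\int_{B_{r}}R$.

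\textbf{Wrong rigidity conclusion.} Even if your inequality closed and gave $|\hat E|\equiv 0$, that means $\ricc=\tfrac12 Rg$, hence $R$ constant by Bianchi, hence $R\equiv 0$: you would only ever land on the flat case and would never see $\erre^{n-2}\times\Sigma^{2}$, where $\hat E\not\equiv 0$.

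\textbf{What the paper actually does.} The test function is $\varphi^{3}e^{f}/|\hat E|$: the $e^{f}$ absorbs the drift, while dividing by $|\hat E|$ makes $|\hat E|e^{f}=|\ricc-\tfrac12 Rg|\le cR$ unweighted, so the boundary term becomes $\int|\nabla|\hat E|||\nabla\varphi|\varphi^{2}e^{f}\le c\int(|\nabla\ricc|+R)|\nabla\varphi|\varphi^{2}$. The new obstacle is the $|\nabla\ricc|$ piece; a separate lemma shows $\int_{M}\tfrac{|\nabla\ricc|^{2}}{R}\varphi^{2}\le c\int_{M}(R+R|\nabla\varphi|^{2})\varphi^{2}$ on any nonflat steady soliton with $\mathrm{sec}\ge 0$, and then H\"older gives the desired $\tfrac{c}{r_{i}}\int_{B_{2r_{i}}}R$. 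The outcome is $Q\equiv 0$ (not $\hat E\equiv 0$); the algebraic equality case forces $\ricc$ to have eigenvalues $0$ (multiplicity $n-2$) and $\tfrac12 R$ (multiplicity $2$), and a Petersen--Wylie constant--rank lemma for $\Delta_{\nabla f}$ then makes the kernel parallel, giving the de~Rham splitting and the cigar factor.
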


In the three dimensional case we can prove the analogous results under weaker assumptions.

\begin{theorem}
  \label{TH_expanding3dim}
  Let $\pa{M^3, g}$ be a three dimensional complete gradient expanding Ricci soliton with  nonnegative Ricci curvature. If $R \in L^1(M^{3})$, then $M^3$ is isometric to a quotient of the Gaussian soliton $\erre^3$.
\end{theorem}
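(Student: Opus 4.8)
The plan is to follow the scheme used for Theorem \ref{TH_expanding}, exploiting that in dimension three the full Riemann tensor is algebraically determined by the Ricci tensor, so that nonnegative Ricci curvature can replace the nonnegative sectional curvature hypothesis in the key integral estimate. I would work with the traceless Ricci tensor $\ricc-\tfrac{R}{3}g$, which is the incarnation in this setting of the weighted Einstein tensor $\We$ from the proof of Theorem \ref{TH_expanding}, together with the drift Laplacian $\Delta_f=\Delta-\pair{\nabla f,\nabla\,\cdot\,}$ attached to the soliton. Here I recall the standard soliton identities $R+\Delta f=3\lambda$, $\nabla R=2\ricc(\nabla f,\cdot)$ and $\Delta_f R=2\lambda R-2\abs{\ricc}^2$. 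Since $\ricc\geq 0$ forces $R\geq 0$, the last identity already shows that $R$ is $f$-superharmonic, which, combined with $R\in L^1(M^3)$, is the source of the decay needed to discard the boundary terms later.

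The core is the Weitzenb\"ock formula for $\We$ derived in the proof of Theorem \ref{TH_expanding}, which schematically reads $\tfrac12\Delta_f\abs{\We}^2=\abs{\nabla\We}^2+(\text{term proportional to }\lambda\abs{\We}^2)+\mathcal R(\We)$, where $\mathcal R(\We)$ collects the contractions of $\Riem$ against two copies of $\We$. In a frame diagonalizing $\We$ with eigenvalues $\nu_i$ (so $\sum_i\nu_i=0$), this curvature term takes the form $\sum_{i<k}R_{ikik}(\nu_i-\nu_k)^2$, which is manifestly nonnegative once the sectional curvatures $R_{ikik}$ are nonnegative; this is precisely where the general proof uses the sectional curvature hypothesis. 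In dimension three I would instead substitute the identity $R_{ikik}=\tfrac12(\mu_i+\mu_k-\mu_l)$ expressing the sectional curvatures through the Ricci eigenvalues $\mu_i=\nu_i+\tfrac{R}{3}$, turning $\mathcal R(\We)$ into an explicit cubic expression in the $\mu_i$. The technical heart, enabled by this three-dimensional algebra, is then to verify, using only $\mu_i\geq 0$ and $\sum_i\nu_i=0$, that this reaction term retains the sign required by the integral argument even though the individual sectional curvatures $K_{ik}$ need not be nonnegative under a mere Ricci hypothesis.

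Having secured the sign of $\mathcal R(\We)$, I would multiply the Weitzenb\"ock identity by a family of cutoff functions, integrate against the Riemannian measure, and exhaust $M^3$. The main obstacle is exactly here: for expanders the drift measure $e^{-f}\,dV$ is infinite, so the self-adjointness of $\Delta_f$ with respect to it is unavailable, and the vanishing of the boundary integrals must be extracted by hand from $R\in L^1$, the superharmonicity of $R$, the quadratic growth of $f$, and the at most Euclidean volume growth guaranteed by $\ricc\geq 0$ via Bishop--Gromov. Once the boundary terms are shown to vanish, the resulting inequality forces $\nabla\We\equiv 0$ and $\We\equiv 0$, so $(M^3,g)$ is Einstein. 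Finally, an Einstein expanding soliton satisfies $\hess f=(\lambda-\tfrac{R}{3})g$ with constant right-hand side; since $\lambda<0$ and $R\geq 0$ rule out a constant potential, the classical rigidity for the equation $\hess f=c\,g$ with $c\neq 0$ on a complete manifold yields that $M^3$ is flat, hence a quotient of the Gaussian soliton $\erre^3$, as claimed.
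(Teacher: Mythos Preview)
Your outline contains a genuine gap at the very first step: you identify the weighted Einstein tensor $\We$ with the traceless Ricci tensor $T=\ricc-\tfrac{R}{3}g$, but in the paper $\We=(\ricc-\tfrac12 Rg)\,e^{-f}$, which is neither traceless nor a simple rescaling of $T$. That $e^{-f}$ weight is the whole point. If you run the Bochner machine on $T$ with the usual drift Laplacian $\Delta_{f}=\Delta-\nabla_{f}$, the soliton identity $\Delta_{f}R_{ij}=2\lambda R_{ij}-2R_{ikjl}R_{kl}$ produces a term $+2\lambda|T|^{2}$ on the right-hand side. For an \emph{expanding} soliton $\lambda<0$, so this term is negative, and after integration you obtain an inequality of the form $-2\lambda\int|T|^{2}\geq\int|\nabla T|^{2}+\int\mathcal{R}$, which carries no information forcing $T\equiv0$. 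The sign of the $\lambda$ term is wrong, and no amount of work on the reaction term or on boundary integrals can repair this.

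What the paper does is different in exactly this respect. The factor $e^{-f}$ in $\We$ turns the relevant operator into $\Delta_{-f}=\Delta+\nabla_{f}$ and, crucially, flips the coefficient to $-(n-2)\lambda|\We|^{2}>0$ (see equation~\eqref{eq_laplTsquared_RiemRwe_mdimT}). The operator $\Delta_{-f}$ is self-adjoint against $e^{f}\,dV$, and since $f\to-\infty$ quadratically on an expander, $e^{f}\,dV$ is a \emph{finite} weight---so your worry about ``the drift measure $e^{-f}\,dV$ being infinite'' evaporates once the correct tensor is used. The paper then takes cutoffs $\varphi=\psi(f)$ in the sublevel sets of $f$ (not radial cutoffs), uses $|\We|e^{f}\leq cR$ together with $R\in L^{1}$ to kill the boundary term, and concludes $\We\equiv0$ from the strict sign $-(n-2)\lambda>0$. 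The three-dimensional algebraic input is not the expression $\sum_{i<k}K_{ik}(\nu_{i}-\nu_{k})^{2}$ you wrote (that is not the reaction term here), but rather the cubic $Q=e^{-2f}\bigl[4R_{ij}R_{jk}R_{ki}-\tfrac72 R|\ricc|^{2}+\tfrac34 R^{3}\bigr]$, whose nonnegativity under $\ricc\geq0$ is the content of Lemma~\ref{dario} and Proposition~\ref{LE42SBAM3D}.
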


\begin{theorem}
  \label{TH_steady3dim}
    Let $\pa{M^3, g}$ be  a three dimensional complete gradient steady Ricci soliton. Suppose that
\[
 \liminf_{r \ra +\infty } \,\, \frac{1}{r}\int_{B_{r}(o)}R = 0 .
\]
Then $M^3$ is isometric to a quotient of $\erre^3$ or  $\erre\times\Sigma^{2}$, where $\Sigma^{2}$ is the cigar soliton.
\end{theorem}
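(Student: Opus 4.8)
The plan is to reduce the statement to Theorem~\ref{TH_steady} applied in dimension $n=3$. The only hypothesis of that theorem not assumed here is nonnegativity of the sectional curvature, so the whole point is to show that, for a three dimensional complete gradient steady soliton, this comes for free. I would start from the standard fact that such a soliton generates a self--similar, eternal solution $g(t)$, $t\in\erre$, to the Ricci flow, obtained by pulling back $g$ along the flow of $\nabla f$; in particular $g(t)$ is an ancient solution. The soliton identity gives that $R+\abs{\nabla f}^2$ is constant, say equal to $C$, so a posteriori $0\le R\le C$ and the scalar curvature is bounded.

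The crux is a curvature estimate for ancient solutions. By the work of B.-L.~Chen, every complete ancient solution to the Ricci flow satisfies $R\ge 0$, and in dimension three the Hamilton--Ivey pinching estimate upgrades this to nonnegative sectional curvature. The mechanism is the following: along an ancient solution, at any point where the smallest eigenvalue $\nu$ of the curvature operator is negative one has a pinching inequality of the form $R\ge -\nu\pa{\ln(-\nu)+\ln(1+t)-3}$; since our solution is eternal, shifting the initial time to $-\infty$ makes the right--hand side blow up unless $\nu\ge 0$, and hence $\varrg$ has nonnegative sectional curvature at every point.

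With this in hand all the hypotheses of Theorem~\ref{TH_steady} are satisfied in dimension $n=3$: $\varrg$ is a complete gradient steady soliton with nonnegative sectional curvature and $\liminf_{r\ra+\infty}\frac{1}{r}\int_{B_{r}(o)}R=0$. That theorem then yields that $M^3$ is isometric to a quotient of $\erre^3$ or of $\erre^{n-2}\times\Sigma^2$, which for $n=3$ is $\erre\times\Sigma^2$; this is exactly the claimed conclusion.

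The main obstacle I expect is precisely the curvature step. The classical Hamilton--Ivey estimate is proved for solutions with bounded curvature, while a general complete steady soliton is not a priori known to have bounded curvature (only $R$ is bounded, by the identity above). I would therefore rely on the version of the pinching estimate valid for arbitrary complete three dimensional ancient solutions, as established by Chen, which removes the global curvature bound; an alternative, more self--contained route would be to first derive an a priori bound on $\abs{\Riem}$ for three dimensional steady solitons and then invoke the estimate in its classical form, but proving such a bound is itself delicate, so quoting the sharp form of the pinching estimate in the complete setting is the cleanest path.
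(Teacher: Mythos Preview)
Your proposal is correct and follows essentially the same approach as the paper: both invoke Chen's result that a three dimensional complete ancient (in particular steady) solution automatically has nonnegative sectional curvature, after which the hypotheses of Theorem~\ref{TH_steady} are satisfied and the conclusion follows. The paper presents the endgame slightly differently---it applies Hamilton's strong maximum principle to split off the flat and $\erre\times\Sigma^{2}$ cases first, and in the remaining strictly positive case reruns the integral estimate with the three dimensional algebraic Proposition~\ref{LE42SBAM3D} (which needs only nonnegative Ricci) to reach a contradiction---but since Chen already gives nonnegative sectional curvature, your direct reduction to Theorem~\ref{TH_steady} is equally valid and arguably cleaner.
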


\begin{rem}
As it will be clear from the proofs of Theorems \ref{TH_expanding} and \ref{TH_expanding3dim}, instead of $R\in L^{1}(M^{n})$ we can assume that
\[
  \liminf_{r \ra +\infty } \,\, \int_{B_{2r}(o)\setminus B_{r}(o)}R = 0.
\]
\end{rem}

\begin{rem}
The quantity
\begin{equation}\label{liminf}
 \liminf_{r \ra +\infty } \,\, \frac{1}{r}\int_{B_{r}(o)}R
\end{equation}
that appears in Theorems \ref{TH_steady} and \ref{TH_steady3dim}  is independent of the choice of the center $o\in M^{n}$. Moreover, note that our assumptions in the steady case do not imply \emph{a priori} that the scalar curvature goes to zero at infinity, in contrast with the results in \cite{deruelle}. In fact, in \cite{deruelle} it is assumed that $R\in L^1(M^n)$. This, using the hypothesis that the steady Ricci soliton has nonnegative sectional curvature, implies that the scalar curvature is nonnegative, bounded, and globally Lipschitz, and thus that $R\ra 0$ at infinity.
\end{rem}

As a consequence of the integral decay estimate in \cite{deruelle} (see Lemma \ref{lemma4.3deruelle}), it follows that the assumption in Theorems \ref{TH_steady} and \ref{TH_steady3dim} holds if $g$ has less than quadratic volume growth, i.e. $\operatorname{Vol}\pa{B_r(o)}=o(r^2)$ as $r\ra+\infty$. This immediately implies the following
\begin{cor}\label{corquadratic}
The only complete gradient steady Ricci solitons of dimension $n\geq 3$ with nonnegative sectional curvature and  less than quadratic volume growth are quotients of $\erre^{n-2}\times\Sigma^{2}$.
\end{cor}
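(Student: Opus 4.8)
The plan is to obtain Corollary \ref{corquadratic} as an immediate consequence of Theorem \ref{TH_steady}: everything reduces to checking that the less than quadratic volume growth hypothesis $\operatorname{Vol}\pa{B_r(o)}=o(r^2)$ forces the integral condition
\[
\liminf_{r\ra+\infty}\frac{1}{r}\int_{B_r(o)}R=0
\]
required there. This implication is exactly the integral decay estimate of Deruelle recorded in Lemma \ref{lemma4.3deruelle}, so in principle one line suffices; I will nonetheless indicate the mechanism that makes it work.

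First I would recall the two elementary identities available on a gradient steady soliton. Tracing \eqref{2} with $\lambda=0$ gives $\Delta f=-R$, while the contracted second Bianchi identity gives $R+\abs{\nabla f}^2\equiv c$ for a constant $c\ge 0$; since nonnegative sectional curvature yields $R\ge 0$, the second identity shows $\abs{\nabla f}\le\sqrt{c}$ everywhere on $M^n$. Integrating $\Delta f=-R$ over $B_r(o)$ and applying the divergence theorem then gives $\int_{B_r(o)}R=-\int_{\partial B_r(o)}\partial_\nu f\le\sqrt{c}\,\operatorname{Area}\pa{\partial B_r(o)}$, which is the substance of Lemma \ref{lemma4.3deruelle}.

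The remaining step is a coarea/averaging argument. Since $\operatorname{Vol}\pa{B_{2r}(o)}-\operatorname{Vol}\pa{B_r(o)}=\int_r^{2r}\operatorname{Area}\pa{\partial B_s(o)}\,ds$, there is some $s\in[r,2r]$ with $\operatorname{Area}\pa{\partial B_s(o)}\le \operatorname{Vol}\pa{B_{2r}(o)}/r$; combining this with the previous estimate and with $R\ge 0$ yields $\frac{1}{s}\int_{B_s(o)}R\le\sqrt{c}\,\operatorname{Vol}\pa{B_{2r}(o)}/r^2$. As $r\ra+\infty$ the right-hand side tends to $0$ by the $o(r^2)$ hypothesis, so $\liminf_{r\ra+\infty}\frac{1}{r}\int_{B_r(o)}R=0$, and Theorem \ref{TH_steady} applies, giving that $M^n$ is a quotient of $\erre^n$ or of $\erre^{n-2}\times\Sigma^2$.

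Since Theorem \ref{TH_steady} and Lemma \ref{lemma4.3deruelle} may be assumed, there is no genuine analytic obstacle; the two points deserving care are the regularity of the distance spheres $\partial B_s(o)$ in the divergence step, which I would handle either through a smooth exhaustion or by simply invoking Lemma \ref{lemma4.3deruelle} verbatim, and, in order to match the stated conclusion exactly, the disposal of the flat alternative $\erre^n$. For the latter I expect the main bookkeeping subtlety: a complete flat steady soliton has polynomial volume growth of integer degree equal to the rank of its Euclidean de Rham factor, so the $o(r^2)$ assumption confines it to the degenerate cases of at most linear growth, and recording these alongside the genuinely nonflat model $\erre^{n-2}\times\Sigma^2$ is what finishes the identification.
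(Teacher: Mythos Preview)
Your proposal is correct and follows the paper's strategy: verify the integral hypothesis of Theorem \ref{TH_steady} via the divergence-theorem estimate $\int_{B_r}R\le\sqrt{c}\,\operatorname{Area}(\partial B_r)$, then conclude. The one technical difference is in how the area term is controlled by volume. The paper (Lemma \ref{lemma4.3deruelle}) invokes Bishop--Gromov under nonnegative Ricci to get $\operatorname{Area}(\partial B_r)\le n\,\operatorname{Vol}(B_r)/r$ for \emph{every} $r$, whence $\tfrac{1}{r}\int_{B_r}R\le n\sqrt{c}\,\operatorname{Vol}(B_r)/r^2\to 0$; you instead use a coarea/pigeonhole step to select good radii $s\in[r,2r]$, which is more elementary (no curvature comparison needed) but yields the estimate only along a subsequence --- exactly enough for the $\liminf$. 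Your closing remark on disposing of the flat alternative is a bookkeeping point the paper does not discuss: its proof of the corollary is literally the one-line reduction to Lemma \ref{lemma4.3deruelle} and Theorem \ref{TH_steady}.
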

In particular, in dimension three the nonnegativity assumption on the curvature is automatically satisfied (see \cite{chen}), implying
\begin{cor}\label{corquadratic3}
The only three dimensional complete gradient steady Ricci solitons  with  less than quadratic volume growth are quotients of $\erre\times\Sigma^{2}$.
\end{cor}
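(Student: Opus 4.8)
The plan is to read Corollary \ref{corquadratic3} as the $n=3$ instance of Corollary \ref{corquadratic}, the only thing left to supply being that in dimension three the nonnegative sectional curvature hypothesis is automatic and hence may be dropped. For this I would invoke the theorem of B.-L. Chen \cite{chen}: a complete gradient steady Ricci soliton evolves self-similarly to an eternal (in particular ancient) solution of the Ricci flow, and in dimension three every complete ancient solution has nonnegative sectional curvature. Consequently any $\pa{M^3,g}$ satisfying the hypotheses of Corollary \ref{corquadratic3} automatically has nonnegative sectional curvature, and therefore meets the hypotheses of Corollary \ref{corquadratic} with $n=3$.

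Granting this, the corollary follows by specialization: Corollary \ref{corquadratic} gives that $M^3$ is isometric to a quotient of $\erre^{n-2}\times\Sigma^2$ with $n=3$, that is, of $\erre\times\Sigma^2$. Should one prefer to argue through Theorem \ref{TH_steady3dim} directly, the single additional link is the implication that less than quadratic volume growth $\operatorname{Vol}(B_r(o)) = o(r^2)$ forces
\[
 \liminf_{r \ra +\infty } \,\, \frac{1}{r}\int_{B_{r}(o)}R = 0,
\]
which is exactly the content of Deruelle's integral decay estimate (Lemma \ref{lemma4.3deruelle}); the sub-quadratic growth then also rules out the simply connected Gaussian model $\erre^3$, whose volume growth is cubic, so that the cigar product $\erre\times\Sigma^2$ is the surviving case.

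I do not expect a genuine obstacle at the level of the corollary itself: all of the analytic work is discharged upstream, in the proof of the main classification theorem and in the two external inputs, namely Chen's positivity of the curvature in dimension three and Deruelle's volume-to-curvature integral estimate. The only delicate point is the correct invocation of \cite{chen}, that is, the fact that the steadiness of the soliton is what upgrades its eternal Ricci flow to one with nonnegative sectional curvature in dimension three; everything else is a matter of bookkeeping the flat quotients and specializing $n$ to $3$.
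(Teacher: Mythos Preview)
Your proposal is correct and matches the paper's own argument essentially verbatim: the paper derives Corollary \ref{corquadratic3} directly from Corollary \ref{corquadratic} by observing, via Chen \cite{chen}, that in dimension three the nonnegative sectional curvature hypothesis is automatically satisfied. Your additional remark tracing the implication through Lemma \ref{lemma4.3deruelle} and Theorem \ref{TH_steady3dim} simply unpacks the proof of Corollary \ref{corquadratic} itself, so it is the same route stated in more detail.
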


We note that the condition in Theorem \ref{TH_expanding3dim} is sharp: in fact there exists a  rotationally symmetric example constructed by Bryant in \cite{bryant} (see also the appendix in \cite{Chodosh}) which has positive sectional curvature, quadratic curvature decay at infinity and Euclidean volume growth.
Moreover, the steady Bryant soliton has positive sectional curvature, linear curvature decay and quadratic volume growth; this shows that the assumptions in Theorem \ref{TH_steady3dim} and Corollary \ref{corquadratic3} are sharp as well.

We explicitly remark that Theorems \ref{TH_steady} and \ref{TH_steady3dim} improve a result in \cite{deruelle}, while the results in the expanding case, to the best of our knowledge, are completely new and should be compared with \cite[Theorem 4]{PRSMathZ} and \cite[Theorem 4.5]{PetWyPac}, where the required integral conditions involve the measure $e^{-f} d\mu$, and the weight $e^{-f}$, under mild assumptions on the curvature, has exponential growth (see e.g. \cite[Lemma 5.5]{CaoCatAltri}). We also note that, in dimension three, the condition
\begin{equation}\label{pippo}
 \liminf_{r \ra +\infty } \,\, \frac{1}{r}\int_{B_{r}(o)}R \geq k > 0
\end{equation}
implies the $k$-noncollapsing of balls with sufficiently large radii,\emph{ a priori} nonuniformly with respect to the center. It would be extremely interesting either to show that the only three dimensional gradient steady Ricci soliton satisfying \eqref{pippo} is, up to scaling, the Bryant soliton, or to construct a ($k$-collapsed) counterexample. The first case, together with Theorem \ref{TH_steady3dim}, would complete the classification of steady solitons in dimension three.

One of the main tool in our analysis is a geometric $(0, 2)$-tensor that we call the \emph{weighted Einstein tensor} $\hat{E}$, and which is defined as

\begin{equation}\label{def_T}
  \hat{E}=\pa{\ricc - \tfrac{1}{2}R \, g} e^{-f},
\end{equation}
where $f$ is the soliton potential. The weighted Einstein tensor  appeared for the first time in \cite{codazzit}, where the authors observed that $\We$ is a Codazzi tensor on every gradient three dimensional Ricci soliton. Here we prove, in Section \ref{SectW}, that on every gradient Ricci soliton $\We$ satisfies the  Weitzenb\"ock formula
\begin{equation}\label{Anita}
 \frac{1}{2}\Delta|\We|^2 = |\nabla\We|^2 - \frac{1}{2}\langle\nabla|\We|^2, \nabla f\rangle-\pa{n-2}\lambda|\We|^2  + Q,
\end{equation}
where $Q$ is a cubic curvature term. Quite surprisingly, we are able to show that this quantity satisfies nice algebraic properties (see Section \ref{SectW} and the Appendix) under suitable curvature assumptions;  namely, we prove that $Q\geq0$ if the sectional curvature (or the Ricci curvature, in dimension three) is nonnegative, and we completely  characterize the equality case. We highlight the fact that equation \eqref{Anita} is only effective when $\lambda\leq 0$; this feature allows us to exploit, in the expanding and steady case, a technique  reminiscent of those used to prove earlier results concerning  gradient shrinking Ricci solitons (see e.g. \cite{NiWallach}, \cite{PetWy}, \cite{Catrino}, \cite{MuWa}, \cite{WWW}).

Exploiting the above formula and using a careful integration by parts argument, in Section \ref{SecExp} we prove Theorems \ref{TH_expanding} and \ref{TH_expanding3dim} in the expanding case, while in Section \ref{SecSteady} we prove Theorems \ref{TH_steady} and \ref{TH_steady3dim} in the steady case.

\section{A Weitzenb\"ock formula for the weighted Einstein tensor}\label{SectW}

Let $(M^{n},g)$ be a complete gradient Ricci soliton of dimension $n\geq 3$, that is a Riemannian manifold satisfying the equation
\begin{equation}\label{sol}
\ricc + \hess f \,=\, \lambda \,g
\end{equation}
for some smooth function $f:M\ra\erre$ and some constant $\lambda\in\erre$.
For the weighted Einstein tensor $\We$ defined in \eqref{def_T} we prove the following
\begin{proposition}
Let $(M^{n},g)$ be a complete gradient Ricci soliton of dimension $n\geq 3$. Then
   \begin{equation}\label{eq_laplTsquared_RiemRT_mdimconE}
    \frac{1}{2}\Delta|\We|^2 = |\nabla\We|^2 - \frac{1}{2}\langle\nabla|\We|^2, \nabla f\rangle-\pa{n-2}\lambda|\We|^2  -2\operatorname{Rm}(\We, \We)-\frac{n-2}{2}R\sq{|\We|^2-\frac{1}{n-2}\pa{\operatorname{tr}\pa{\We}}^2},
  \end{equation}
  where $\operatorname{Rm}$ is the Riemann curvature tensor, $\operatorname{Rm}\pa{\We, \We}=R_{ijkl}\We_{ik}\We_{jl}$ and $\operatorname{tr}$ is the trace.
\end{proposition}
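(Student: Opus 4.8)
The plan is to begin from the standard Bochner identity for the rough (connection) Laplacian of a symmetric $2$-tensor,
\begin{equation*}
\frac{1}{2}\Delta|\We|^2 = |\nabla\We|^2 + \langle\We,\Delta\We\rangle,
\end{equation*}
which reduces the whole statement to computing the rough Laplacian $\Delta\We$ and pairing it with $\We$. Writing $\We = e^{-f}E$ with $E = \ricc - \tfrac12 R g$ the ordinary Einstein tensor, a direct differentiation gives
\begin{equation*}
\Delta\We = e^{-f}\bigl(\Delta E - 2\nabla_{\nabla f}E + (|\nabla f|^2 - \Delta f)\,E\bigr),
\end{equation*}
so the first genuine task is to find the rough Laplacian of $E$ on the soliton.

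For this I would invoke the classical soliton identities: $\Delta f = n\lambda - R$, the divergence-free property $\diver E = 0$, $\nabla R = 2\,\ricc(\nabla f,\,\cdot\,)$, together with the two Lichnerowicz-type formulas $\Delta_f R_{ij} = 2\lambda R_{ij} - 2R_{ikjl}R_{kl}$ and $\Delta_f R = 2\lambda R - 2|\ricc|^2$, where $\Delta_f = \Delta - \langle\nabla f,\nabla\,\cdot\,\rangle$ is the drift Laplacian. Combining these and subtracting the trace part yields
\begin{equation*}
\Delta E_{ij} = \langle\nabla f,\nabla E_{ij}\rangle + 2\lambda E_{ij} - 2R_{ikjl}R_{kl} + |\ricc|^2 g_{ij},
\end{equation*}
which I would substitute into the expression for $\Delta\We$ above.

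Next I would contract the resulting identity with $\We$ and carefully track the $e^{-f}$ weights, converting everything back to $\We$-quantities via $|\We|^2 = e^{-2f}|E|^2$ and $\nabla|\We|^2 = e^{-2f}(\nabla|E|^2 - 2|E|^2\nabla f)$. The $|\nabla f|^2$ contributions arising from the Bochner expansion and from $\nabla|E|^2$ cancel exactly, leaving precisely the drift term $-\tfrac12\langle\nabla|\We|^2,\nabla f\rangle$; combining the $2\lambda|\We|^2$ term with $-|\We|^2\Delta f = -(n\lambda - R)|\We|^2$ then produces both the advertised $-(n-2)\lambda|\We|^2$ term and an extra $R|\We|^2$ contribution that I fold into the curvature block.

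The only genuinely delicate step is the final algebraic reduction of the remaining cubic curvature terms. Using $R_{ij} = E_{ij} + \tfrac12 R g_{ij}$, the trace relation $\operatorname{tr}E = -\tfrac{n-2}{2}R$, the contraction $g^{kl}R_{ikjl} = R_{ij}$, and the symmetry $R_{ikjl}E_{ij}E_{kl} = R_{ijkl}E_{ik}E_{jl}$, I expect the combination
\begin{equation*}
-2E_{ij}R_{ikjl}R_{kl} - \tfrac{n-2}{2}R|\ricc|^2 + R|E|^2
\end{equation*}
to collapse precisely into $-2\operatorname{Rm}(E,E) - \tfrac{n-2}{2}R\bigl[|E|^2 - \tfrac{1}{n-2}(\operatorname{tr}E)^2\bigr]$; matching the $R^3$ coefficients is the consistency check that pins down the whole computation. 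Restoring the overall $e^{-2f}$ factor then gives the stated formula. The principal obstacle is organizing this last bookkeeping so that all the spurious $R^3$ and $R|\ricc|^2$ terms cancel against $\tfrac12 R(\operatorname{tr}\We)^2$ with exactly the right coefficients.
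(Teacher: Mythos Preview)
Your proposal is correct and follows essentially the same route as the paper: both arguments reduce to computing the rough Laplacian $\Delta\We_{ij}$ via the soliton identities $\Delta_f R_{ij}=2\lambda R_{ij}-2R_{ikjl}R_{kl}$, $\Delta_f R=2\lambda R-2|\ricc|^2$, $\Delta f=n\lambda-R$, and then contracting with $\We_{ij}$. The only cosmetic difference is that the paper differentiates the relation $e^{f}\We_{ij}=E_{ij}$ directly and carries $\We$ throughout, whereas you expand $\Delta(e^{-f}E)$ by the product rule and convert back at the end; the intermediate identities and the final algebraic reduction are the same.
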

\begin{proof}
  From \eqref{def_T} we have, on a local orthonormal frame,
\begin{equation*}
  e^{f}\We_{ij} = R_{ij}-\frac{1}{2}R\delta_{ij},
\end{equation*}
which implies, taking the covariant derivative,
\begin{equation}\label{eq_covderivTmdim}
  e^{f}\pa{f_k\We_{ij}+\We_{ij, k}} = R_{ij, k} - \frac12 R_k\delta_{ij}.
\end{equation}

 Taking the divergence of the previous  equation  we get
  \begin{equation}\label{eq_divt_ij}
    e^f\pa{\abs{\nabla f}^2\We_{ij}+2f_k\We_{ij, k}+\Delta f \We_{ij}+\We_{ij, kk}} = R_{ij, kk}-\frac12\Delta R\delta_{ij}.
  \end{equation}
  Now we recall that, for a gradient Ricci soliton, we have the validity of the following equations (see e.g. \cite{ELNM} or \cite{MRR_GS}):
  \[
  R_{ij, kk} = f_kR_{ij, k} + 2\lambda R_{ij}-2R_{kt}R_{ikjt},
  \]
  \[
  \Delta R = \pair{\nabla f, \nabla R} + 2\lambda R - 2\abs{\ricc}^2,
  \]
  \[
  R+\Delta f = n\lambda.
  \]
  Inserting the previous relations in equation \eqref{eq_divt_ij}, using the definition of $\We$ (which implies that $R_{ij}=e^f\We_{ij}+\frac12 R\delta_{ij}$ and $\abs{\ricc}^2=e^{2f}|\We|^2-\frac{(n-4)}{4} R^2$) and simplifying  we deduce
  \begin{equation}\label{eq_doublecovderti}
    e^f\pa{\We_{ij, kk} + f_k\We_{ij, k}+(n-2)\lambda \We_{ij}}=-2e^f\We_{kt}R_{ikjt} -\frac{(n-2)}{4}R^2\delta_{ij} + e^{2f}|\We|^2 \delta_{ij}.
  \end{equation}
  Now we contract \eqref{eq_doublecovderti} with $\We_{ij}$, observing that $\operatorname{tr}(\We) = \We_{tt} = -\frac{(n-2)}{2} R e^{-f}$, and we obtain
  \begin{equation}
    \We_{ij}\We_{ij, kk} = -\frac12\langle\nabla|\We|^2, \nabla f\rangle -(n-2)\lambda|\We|^2 - 2\operatorname{Rm}(\We, \We) - \frac{(n-2)}{2} R\sq{|\We|^2-\frac{1}{n-2}\pa{\operatorname{tr}(\We)}^2},
  \end{equation}
  which easily implies \eqref{eq_laplTsquared_RiemRT_mdimconE} since  $\frac12\Delta|\We|^2 = |\nabla\We|^2 +\We_{ij}\We_{ij, kk}$.
\end{proof}

\begin{cor}\label{COR_ndimT}
Let $(M^{n},g)$ be a complete gradient Ricci soliton of dimension $n\geq 3$. Then

  \begin{equation}\label{eq_laplTsquared_RiemRwe_mdimT}
    \frac{1}{2}\Delta|\We|^2 = |\nabla\We|^2 - \frac{1}{2}\langle\nabla|\We|^2, \nabla f\rangle-\pa{n-2}\lambda|\We|^2  + Q
  \end{equation}
  where $$Q:=e^{-2f}\sq{\frac{(n-2)^3}{4n^2}R^3-2R_{ikjt}T_{ij}T_{kt}-\frac{\pa{n-2}\pa{n-4}}{2n}R\abs{T}^2},$$
  where $T$ is the traceless Ricci tensor.
\end{cor}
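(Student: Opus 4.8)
The plan is to treat this corollary as a purely algebraic rewriting of the Proposition: the two displayed Weitzenb\"ock identities differ only in how the last two terms of \eqref{eq_laplTsquared_RiemRiemRT_mdimconE} are packaged, so it suffices to show
\[
 -2\operatorname{Rm}(\We, \We)-\frac{n-2}{2}R\sq{|\We|^2-\frac{1}{n-2}\pa{\operatorname{tr}\pa{\We}}^2} = Q.
\]
The natural first step is to express $\We$ through the traceless Ricci tensor $T_{ij}=R_{ij}-\frac{R}{n}\delta_{ij}$. Substituting $R_{ij}=T_{ij}+\frac Rn\delta_{ij}$ into the definition \eqref{def_T} gives, in a local orthonormal frame,
\[
 \We_{ij}=e^{-f}\pa{T_{ij}-\frac{n-2}{2n}R\,\delta_{ij}},
\]
which is the only structural input needed for the rest of the argument.

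Next I would compute each scalar appearing in the Proposition by expanding this expression and repeatedly using that $T$ is trace-free, i.e. $T_{ij}\delta_{ij}=0$. This immediately recovers the identity $\operatorname{tr}(\We)=-\frac{n-2}{2}Re^{-f}$ already exploited in the proof of the Proposition, and yields
\[
 |\We|^2=e^{-2f}\pa{|T|^2+\tfrac{(n-2)^2}{4n}R^2},\qquad \pa{\operatorname{tr}(\We)}^2=e^{-2f}\,\tfrac{(n-2)^2}{4}R^2,
\]
so that $|\We|^2-\frac{1}{n-2}(\operatorname{tr}(\We))^2=e^{-2f}\pa{|T|^2-\frac{n-2}{2n}R^2}$. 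For the curvature term I would use bilinearity together with the symmetry $\operatorname{Rm}(A,B)=\operatorname{Rm}(B,A)$ (a consequence of the pair symmetry of $R_{ijkl}$) to write
\[
 \operatorname{Rm}(\We, \We)=e^{-2f}\sq{\operatorname{Rm}(T,T)-\tfrac{n-2}{n}R\,\operatorname{Rm}(T,g)+\tfrac{(n-2)^2}{4n^2}R^2\operatorname{Rm}(g,g)},
\]
and then collapse the mixed contractions via the two single-trace identities $\operatorname{Rm}(g,g)=R_{ijij}=R$ and $\operatorname{Rm}(T,g)=R_{ijkj}T_{ik}=\ricc_{ik}T_{ik}=|T|^2$, the last equality again using the trace-freeness of $T$.

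The final step is to insert these expressions into \eqref{eq_laplTsquared_RiemRT_mdimconE} and collect terms. The leading contribution $-2e^{-2f}\operatorname{Rm}(T,T)$ is, after a relabelling of dummy indices, exactly the curvature term $-2e^{-2f}R_{ikjt}T_{ij}T_{kt}$ of $Q$; the remaining pieces produce only multiples of $e^{-2f}R^3$ and of $e^{-2f}R|T|^2$, whose coefficients one checks recombine to $\frac{(n-2)^3}{4n^2}$ and $-\frac{(n-2)(n-4)}{2n}$ respectively, matching $Q$ precisely. I expect the only delicate point to be bookkeeping of the curvature conventions: one must verify that the single contraction obeys $R_{ijkj}=+\ricc_{ik}$ (equivalently $R_{ijij}=R$) in the same sign convention used in the Proposition, and that the contraction pattern defining $\operatorname{Rm}(\We,\We)=R_{ijkl}\We_{ik}\We_{jl}$ coincides with the pattern $R_{ikjt}T_{ij}T_{kt}$ in $Q$. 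A quick relabelling confirms both tensors are contracted at positions $(1,3)$ and $(2,4)$, after which the identity follows from the elementary algebra above; indeed, the opposite sign convention would turn the $R|T|^2$ coefficient into $-\frac{(n-2)(n+4)}{2n}$, so the stated form of $Q$ already fixes the convention unambiguously.
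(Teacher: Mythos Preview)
Your proposal is correct and follows essentially the same route as the paper: both arguments write $\We_{ij}=e^{-f}\big(T_{ij}-\tfrac{n-2}{2n}R\,\delta_{ij}\big)$ and substitute into the Weitzenb\"ock identity of the Proposition, using trace-freeness of $T$ together with $R_{ijkj}=\ricc_{ik}$ and $R_{ijij}=R$ to collapse the cross terms. The paper is simply terser---it records only the expansion of $-2R_{ikjt}\We_{ij}\We_{kt}e^{2f}$ and leaves the remaining bookkeeping (your computation of $|\We|^2-\tfrac{1}{n-2}(\operatorname{tr}\We)^2$ and the final collection of the $R^3$ and $R|T|^2$ coefficients) implicit.
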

\begin{proof} We recall that, in a local orthonormal frame, the components $T_{ij}$ of $T$ are
\[
T_{ij}=R_{ij}-\frac{R}{n}\delta_{ij}.
\]
Using the definition of $\We$  we deduce that
\[
-2R_{ikjt}\We_{ij}\We_{kt} e^{2f} = -2R_{ikjt}T_{ij}T_{kt} + \frac{2(n-2)}{n}R\abs{T}^2-\frac{(n-2)^2}{2n^2}R^3 .
\]
Now the claim follows inserting the previous equation into \eqref{eq_laplTsquared_RiemRT_mdimconE}.
\end{proof}

In the particular case of dimension three we have
\begin{cor}\label{COR_3dimT}
Let $(M^{3},g)$ be a three dimensional complete gradient Ricci soliton. Then
   \begin{equation}\label{eq_laplTsquared_2RiccScalcoro}
    \frac{1}{2}\Delta|\We|^2 = |\nabla\We|^2 - \frac{1}{2}\langle\nabla|\We|^2, \nabla f\rangle-\lambda|\We|^2 + Q,
  \end{equation}
  with $$Q=e^{-2f}\sq{4R_{ij}R_{jk}R_{ki}-\frac{7}{2}R\abs{\ricc}^2+\frac{3}{4}R^3}.$$
\end{cor}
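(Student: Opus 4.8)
The plan is to specialize Corollary~\ref{COR_ndimT} to $n=3$ and then convert the resulting expression for $Q$, which is written in terms of the traceless Ricci tensor $T$, into one involving $\ricc$ and $R$ only. Setting $n=3$, the coefficient $(n-2)\lambda$ collapses to $\lambda$, matching \eqref{eq_laplTsquared_2RiccScalcoro}, while the bracket in $Q$ becomes
\[
\frac{(n-2)^3}{4n^2}R^3 \;\rightsquigarrow\; \frac{1}{36}R^3, \qquad -\frac{(n-2)(n-4)}{2n}R\abs{T}^2 \;\rightsquigarrow\; +\frac{1}{6}R\abs{T}^2,
\]
the sign of the last term turning positive because $n-4=-1$. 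Hence the entire content of the corollary reduces to evaluating the single curvature contraction $R_{ikjt}T_{ij}T_{kt}$ and re-expressing the scalar invariants of $T$ through those of $\ricc$.

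The key tool is that in dimension three the Weyl tensor vanishes, so the full Riemann tensor is algebraically determined by $\ricc$ and $R$ through the Weyl-free decomposition, which in an orthonormal frame reads
\[
R_{ikjt} = R_{ij}\delta_{kt}+R_{kt}\delta_{ij}-R_{it}\delta_{kj}-R_{kj}\delta_{it}-\tfrac{R}{2}\pa{\delta_{ij}\delta_{kt}-\delta_{it}\delta_{kj}}.
\]
I would contract this identity against $T_{ij}T_{kt}$. Since $T$ is symmetric and trace-free, every term in which a Kronecker delta pairs the two indices of a single $T$ carries a factor $T_{ii}=0$ and therefore vanishes; the three surviving contractions collapse to traces of products of $\ricc$ and $T$, giving
\[
R_{ikjt}T_{ij}T_{kt} = -2\,\operatorname{tr}\pa{\ricc\,T^2}+\tfrac{R}{2}\abs{T}^2, \qquad \operatorname{tr}\pa{\ricc\,T^2}=R_{ij}T_{jk}T_{ki}.
\]

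Finally I would pass from $T$ back to $\ricc$ via $T_{ij}=R_{ij}-\tfrac{R}{3}\delta_{ij}$, which yields the elementary identities $\abs{T}^2=\abs{\ricc}^2-\tfrac13R^2$ and $\operatorname{tr}\pa{\ricc\,T^2}=R_{ij}R_{jk}R_{ki}-\tfrac{2R}{3}\abs{\ricc}^2+\tfrac{R^3}{9}$. Substituting these into the displayed value of $R_{ikjt}T_{ij}T_{kt}$, then into the bracket $\tfrac{1}{36}R^3-2R_{ikjt}T_{ij}T_{kt}+\tfrac16R\abs{T}^2$, and collecting the three independent cubic invariants $R_{ij}R_{jk}R_{ki}$, $R\abs{\ricc}^2$ and $R^3$, one finds the coefficients $4$, $-\tfrac72$ and $\tfrac34$ respectively, which is exactly the asserted form of $Q$. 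The argument is entirely algebraic, so it is not really an obstacle so much as a bookkeeping exercise; the only place demanding care is the contraction of the Weyl-free decomposition against $T$ (keeping the index pairings straight) and the final gathering of the rational coefficients, and that is where I would double-check the arithmetic.
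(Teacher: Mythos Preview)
Your argument is correct and follows essentially the same route as the paper: specialize Corollary~\ref{COR_ndimT} to $n=3$, invoke the three–dimensional decomposition of $R_{ikjt}$ in terms of $\ricc$ and $R$, and use the elementary relations between the invariants of $T$ and those of $\ricc$. The only cosmetic difference is the order of the two substitutions—the paper first trades $T$ for $\ricc$ in the Riemann contraction and then applies the decomposition, whereas you apply the decomposition to $R_{ikjt}T_{ij}T_{kt}$ first (profiting from $\operatorname{tr}T=0$ to kill several terms) and convert to $\ricc$ afterward; either way the bookkeeping lands on the same coefficients $4,\,-\tfrac72,\,\tfrac34$.
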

\begin{proof}
  The proof of the corollary is a simple computation using the fact that, in dimension three, one has
  \[
  \abs{T}^2 = \abs{\ricc}^2 - \frac13 R^2,
  \]
  \[
  R_{ijkt}T_{ij}T_{kt} = R_{ijkt}R_{ij}R_{kt} -\frac{2}{3}R\abs{\ricc}^2 +\frac{1}{9}R^3
  \]
  and
  \[
  R_{ijkt} = R_{ik}\delta_{jt}-R_{it}\delta_{jk}+R_{jt}\delta_{ik}-R_{jk}\delta_{it} -\frac{R}{2}\pa{\delta_{ik}\delta_{jt}-\delta_{it}\delta_{jk}}.
  \]
 
\end{proof}

In the next proposition we prove the main integral estimate that will be used in the proof of our results.
\begin{proposition} \label{prop_intmain}
Let $(M^{n},g)$ be a complete gradient Ricci soliton of dimension $n\geq 3$. Then, either $(M^{n},g)$ is Ricci flat or, for every nonnegative cutoff function $\varphi$ with compact support in $M^n$, we have

\begin{equation}\label{eq_intmain}
\int_{M} \frac{\sq{Q-(n-2)\lambda|\We|^{2}} \varphi^3 e^{f}}{|\We|}  \leq - 3\int_{M} \langle \nabla |\We|, \nabla \varphi \rangle\varphi^2 e^{f}
\end{equation}

\end{proposition}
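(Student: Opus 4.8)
The plan is to turn the Weitzenb\"ock identity of Corollary~\ref{COR_ndimT} into a scalar elliptic differential inequality for $u:=|\We|$ and then integrate it against the weight $e^{f}\varphi^{3}$. Wherever $u>0$ one has $\tfrac12\Delta|\We|^{2}=u\,\Delta u+|\nabla u|^{2}$, and Kato's inequality gives $|\nabla\We|^{2}\ge|\nabla u|^{2}$; substituting both into \eqref{eq_laplTsquared_RiemRwe_mdimT} and using $\tfrac12\pair{\nabla|\We|^{2},\nabla f}=u\,\pair{\nabla u,\nabla f}$, the two $|\nabla u|^{2}$ terms cancel and, after dividing by $u$, one obtains
\begin{equation*}
\Delta u+\pair{\nabla u,\nabla f}\ \ge\ \frac{Q-(n-2)\lambda u^{2}}{u},\qquad\text{i.e.}\qquad e^{-f}\operatorname{div}\!\big(e^{f}\nabla u\big)\ \ge\ \frac{Q-(n-2)\lambda u^{2}}{u}.
\end{equation*}
This is precisely the inequality to be tested against $e^{f}\varphi^{3}$.

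To handle the zero set of $\We$ I would regularize by $u_{\eps}:=\sqrt{|\We|^{2}+\eps^{2}}$, which is smooth and strictly positive on all of $M$. Since $u_{\eps}^{2}$ and $|\We|^{2}$ differ by a constant the left-hand side of \eqref{eq_laplTsquared_RiemRwe_mdimT} is unchanged, while $\nabla u_{\eps}=u_{\eps}^{-1}\pair{\We,\nabla\We}$ satisfies $|\nabla u_{\eps}|\le|\nabla\We|$ by Cauchy--Schwarz; the same computation then gives, now everywhere on $M$,
\begin{equation*}
\operatorname{div}\!\big(e^{f}\nabla u_{\eps}\big)\ \ge\ \frac{Q-(n-2)\lambda|\We|^{2}}{u_{\eps}}\,e^{f}.
\end{equation*}
Multiplying by $\varphi^{3}\ge0$, integrating over $M$, and integrating by parts to move the divergence onto $\varphi^{3}$ (there is no boundary term since $\varphi$ is compactly supported) yields
\begin{equation*}
\int_{M}\frac{Q-(n-2)\lambda|\We|^{2}}{u_{\eps}}\,\varphi^{3}e^{f}\ \le\ -3\int_{M}\pair{\nabla u_{\eps},\nabla\varphi}\varphi^{2}e^{f}.
\end{equation*}

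It remains to let $\eps\to0$, which I expect to be the only delicate step. On the right-hand side $\nabla u_{\eps}\to\nabla|\We|$ pointwise with $|\nabla u_{\eps}|\le|\nabla\We|$, so dominated convergence on the compact set $\supp\varphi$ gives the right-hand side of \eqref{eq_intmain}. For the left-hand side the key point is that at any point where $\We=0$ one has $\ricc=\tfrac12Rg$, whose trace forces $R=0$ (as $n\neq2$), so that the traceless Ricci tensor, and hence $Q$, vanish there; thus the numerator $Q-(n-2)\lambda|\We|^{2}$ vanishes identically on $\{\We=0\}$ and the integrand is genuinely zero there for every $\eps$. Splitting this numerator into its positive and negative parts and applying monotone convergence (since $u_{\eps}\downarrow|\We|$, each part increases to its limit) lets the inequality pass to the limit in the extended sense, giving \eqref{eq_intmain}. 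Finally, the dichotomy in the statement is simply whether $\We\equiv0$: tracing the defining identity \eqref{def_T} shows $\We\equiv0$ is equivalent to $M$ being Ricci flat, the first alternative, and otherwise $\We\not\equiv0$ and the left-hand integral is meaningful.
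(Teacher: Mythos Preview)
Your argument is correct and follows the same overall strategy as the paper (Weitzenb\"ock formula, Kato's inequality, integration by parts against $e^{f}\varphi^{3}$, then $\eps\to0$), but two technical choices differ. First, you regularize with the smooth function $u_{\eps}=\sqrt{|\We|^{2}+\eps^{2}}$ and derive a genuine differential inequality for $u_{\eps}$, whereas the paper multiplies \eqref{eq_laplTsquared_RiemRwe_mdimT} by $h_{\eps}^{-1}$ with the Lipschitz truncation $h_{\eps}=\max(|\We|,\eps)$ and integrates directly. Second, and more interestingly, you handle the zero set of $\We$ by the pointwise observation that $\We(p)=0$ forces $\ricc(p)=\tfrac12R(p)g$, hence $R(p)=0$, hence $T(p)=0$ and $Q(p)=0$; thus the numerator $Q-(n-2)\lambda|\We|^{2}$ vanishes identically on $\{\We=0\}$ and the passage to the limit needs no information on the size of that set. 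The paper instead invokes the real analyticity of gradient Ricci solitons (Bando; see also \cite{catRig}) to conclude that either $\We\equiv0$ or $\{\We=0\}$ has measure zero, and then uses $|\We|h_{\eps}^{-1}\to1$ a.e.\ to pass to the limit. Your route is more elementary in that it avoids the analyticity input altogether; the paper's route, on the other hand, yields the additional structural fact that the zero set is null in the non--Ricci-flat case. Your closing remark about passing to the limit ``in the extended sense'' via monotone convergence on the positive and negative parts is a bit informal (if both parts diverged the difference would be undefined), but this matches the level of detail in the paper and causes no trouble in the applications, where $Q\ge0$ and $\lambda\le0$ make the integrand nonnegative.
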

\begin{proof}
For every $\eps\geq0$ define $$\Omega_\eps:=\{x\in M^n\,|\, |\We(x)|\geq\eps\}$$ and let
$$h_\eps(x):=\begin{cases}|\We(x)|\qquad\textrm{ if }x\in\Omega_\eps,\\
\eps\qquad\qquad\textrm{ if }x\in M\setminus\Omega_\eps.\end{cases}$$
Let $\varphi$ be a smooth nonnegative cutoff function with compact support in $M$. We  multiply equation \eqref{eq_laplTsquared_RiemRwe_mdimT} by $h_{\eps}^{-1}\,\varphi^3 \, e^{f}$ and integrate on $M^n$, deducing

\begin{align*}
 \frac{1}{2}\int_{M} \frac{\Delta |\We|^{2} \varphi^3 e^{f}}{h_{\eps}} &= \int_{M} \frac{\langle \nabla |\We|, \nabla h_{\eps}\rangle |\We|\varphi^3 e^{f} }{h_{\eps}^{2}} -  3\int_{M} \frac{\langle \nabla |\We|, \nabla \varphi \rangle |\We|\varphi^2 e^{f} }{h_{\eps}}- \int_{M} \frac{\langle \nabla |\We|, \nabla f \rangle \varphi^3 e^{f} }{h_{\eps}}.
\end{align*}
Since $h_{\eps}=|\We|$ on $\Omega_{\eps}$ and $\nabla h_{\eps}=0$ on $M\setminus \Omega_{\eps}$, we obtain

\begin{align*}
 \frac{1}{2}\int_{M} \frac{\Delta |\We|^{2} \varphi^3 e^{f}}{h_{\eps}} &= \int_{M} \frac{|\nabla h_{\eps}|^{2} \varphi^3 e^{f} }{h_{\eps}} -  3\int_{M} \frac{\langle \nabla |\We|, \nabla \varphi \rangle |\We|\varphi^2 e^{f} }{h_{\eps}}- \int_{M} \frac{\langle \nabla |\We|, \nabla f \rangle \varphi^3 e^{f} }{h_{\eps}}.
\end{align*}
Equation \eqref{eq_laplTsquared_RiemRwe_mdimT} and Kato's inequality yield

\begin{align*}
0 &= \int_{M}\frac{|\nabla \We|^{2} \varphi^3 e^{f}}{h_{\eps}} - \int_{M} \frac{|\nabla h_{\eps}|^{2} \varphi^3 e^{f} }{h_{\eps}} + 3\int_{M} \frac{\langle \nabla |\We|, \nabla \varphi \rangle |\We|\varphi^2 e^{f} }{h_{\eps}} + \int_{M} \frac{\sq{Q-(n-2)\lambda|\We|^{2}} \varphi^3 e^{f}}{h_{\eps}} \\
&\geq \int_{M}\frac{|\nabla |\We||^{2} \varphi^3 e^{f}}{h_{\eps}} - \int_{M} \frac{|\nabla h_{\eps}|^{2} \varphi^3 e^{f} }{h_{\eps}} + 3\int_{M} \frac{\langle \nabla |\We|, \nabla \varphi \rangle |\We|\varphi^2 e^{f} }{h_{\eps}} + \int_{M} \frac{\sq{Q-(n-2)\lambda|\We|^{2}} \varphi^3 e^{f}}{h_{\eps}} \\
&= \int_{M\setminus \Omega_{\eps}}\frac{|\nabla \We|^{2} \varphi^3 e^{f}}{h_{\eps}} + 3\int_{M} \frac{\langle \nabla |\We|, \nabla \varphi \rangle |\We|\varphi^2 e^{f} }{h_{\eps}} + \int_{M} \frac{\sq{Q-(n-2)\lambda|\We|^{2}} \varphi^3 e^{f}}{h_{\eps}} \\
&\geq 3\int_{M} \frac{\langle \nabla |\We|, \nabla \varphi \rangle |\We|\varphi^2 e^{f} }{h_{\eps}} + \int_{M} \frac{\sq{Q-(n-2)\lambda|\We|^{2}} \varphi^3 e^{f}}{h_{\eps}}.
\end{align*}
Now, since every complete Ricci soliton is real analytic in suitable coordinates (see \cite{bando} and \cite[Theorem 2.4]{catRig}), by the unique continuation property one has that either $|\We|\equiv 0 $ or the zero set of $|\We|$ has zero measure. In the first case, Bianchi identity implies that $g$ is Ricci flat while in the second case, taking the limit as $\eps\rightarrow 0$, since $ |\We| h_{\eps}^{-1} \rightarrow 1$ almost everywhere on $M^n$, inequality \eqref{eq_intmain} follows.
\end{proof}

\section{Expanding case: proof of Theorems \ref{TH_expanding} and \ref{TH_expanding3dim}}\label{SecExp}

\subsection{The $n$-dimensional case}

Let $\pa{M^n, g}$ be a complete gradient expanding Ricci soliton of dimension $n\geq 3$ with nonnegative sectional curvature and assume that $R\in L^1(M^n)$.
From Proposition \ref{prop_intmain}, we have that either the soliton is Ricci flat (hence flat, since $g$ has nonnegative sectional curvature) or
\begin{equation}\label{giani}
\int_{M} \frac{\sq{Q-(m-2)\lambda|\We|^{2}} \varphi^3 e^{f}}{|\We|}  \leq - 3\int_{M} \langle \nabla |\We|, \nabla \varphi \rangle\varphi^2 e^{f}
\end{equation}
for every nonnegative smooth cutoff function $\varphi$ with compact support in $M^n$. Since $(M^n,g)$ has nonnegative sectional curvature,  $|\operatorname{Rm}|\leq \alpha R$, for some positive constant $\alpha$ (see e.g. \cite{BourgK}) Moreover, for expanding solitons with nonnegative Ricci curvature we have that the scalar curvature $R$ is bounded, see for instance \cite[Proposition 2.4]{deruelle2}. Thus $g$ has bounded curvature. We recall that for every gradient Ricci soliton we have  Hamilton's identity
\[
R+|\nabla f|^{2} -2\lambda f = c
\]
for some real constant $c$ (\cite{hamilton2}). Since $R\geq 0$, we deduce that $|\nabla f|^{2} \leq 2\lambda f+c$. By \cite[Lemma 5.5]{CaoCatAltri} there exist positive constants $c_{1},c_{2},c_{3}$ such that
\begin{equation}\label{stimaeffe}
-\frac{\lambda}{2}(r(x)-c_{1})^{2}-c_{2}\leq -f(x) \leq -\frac{\lambda}{2}(r(x)+c_{3})^{2},
\end{equation}
where $r(x)=dist(x,o)$ for some fixed origin $o\in M^n$; in particular $f$ is proper. We define, for $t \gg 1$,
\[
\Omega_t = \set{x \in M^n : -f(x)\leq t}.
\]
We choose $\varphi(x)=\psi(f(x))$, where $\psi : [0, +\infty) \ra [0, +\infty)$ is a smooth nonincreasing function with support in $[0, 2t]$, such that $\psi\equiv 1$ in $[0, t]$ and
    \[
    \abs{\psi'(s)} \leq \frac{c}{s}\psi^{3/4}(s)\qquad \text{ and }\, \abs{\psi''(s)} \leq \frac{c}{s^2}\psi^{1/2}(s)
    \]
    for some $c>0$. Since in $\Omega_t$ we have $\abs{\nabla f}\leq c\sqrt{f}\leq c\sqrt{t}$,  we deduce
   \begin{equation}\label{phi1}
      \abs{\nabla\varphi} \leq \abs{\psi'}\abs{\nabla f}\leq \frac{c}{\sqrt{t}} \qquad \text{in }\, \Omega_t;
    \end{equation}
    moreover, since $\Delta f= n\lambda -R \leq n\lambda$ on $M$,
    \begin{equation}\label{phi2}
      \abs{\Delta \varphi}\leq \abs{\psi'\Delta f + \psi''\abs{\nabla f}^2}\leq \frac{c}{t}\qquad \text{in }\, \Omega_t.
    \end{equation}
     Then, integrating by parts we have
\[
\abs{\int_{M} \langle \nabla |\We|, \nabla \varphi \rangle\varphi^2 e^{f}} = \int_{\Omega_{2t}\setminus\Omega_t}\pa{\abs{\Delta\varphi}\varphi^2+2\varphi\abs{\nabla\varphi}^2+\abs{\nabla f}\abs{\nabla\varphi}\varphi^2}|\We| e^{f} .
\]
By the definition of $\We$ and the nonnegative curvature assumption one has $\abs{\We}e^f\leq cR$, and from the previous estimates \eqref{phi1} and \eqref{phi2} we get
\[
\abs{\int_{M} \langle \nabla |\We|, \nabla \varphi \rangle\varphi^2 e^{f}}\leq c\int_{\Omega_{2t}\setminus\Omega_t}R.
\]
By \eqref{stimaeffe} and since $R\in L^1(M^n)$, the left-hand side tends to zero as $t\ra+\infty$, and from \eqref{giani} we obtain, applying Fatou's lemma,
\[
\int_{M} \frac{\sq{Q-(m-2)\lambda|\We|^{2}} \varphi^3 e^{f}}{|\We|}\leq 0.
\]
Now, we use the fact that under our assumptions $Q$ is nonnegative (see Proposition \ref{LE42SBAM} in the Appendix), and since $\lambda$ is strictly negative we get $\We\equiv 0$.  By Bianchi identity we get $R\equiv 0$, and so $g$ is flat by the nonnegative curvature assumption. This concludes the proof of Theorem \ref{TH_expanding}.


\subsection{The $3$-dimensional case} The proof of Theorem \ref{TH_expanding3dim} in dimension three is formally the same as the higher dimensional case, with some minor corrections. In fact, under the weak assumption of nonnegativity of the Ricci curvature we still have that the full curvature tensor is controlled by the scalar curvature, i.e. $|\operatorname{Rm}|\leq \alpha R$ for some constant $\alpha$. Hence, following the proof in the previous subsection, we obtain that either $(M^{3}, g)$ is flat, or
\[
\int_{M} \frac{\sq{Q-\lambda|\We|^{2}} \varphi^3 e^{f}}{|\We|}\leq 0.
\]
Now, we use the fact that under our assumptions $Q$ is nonnegative (see Proposition \ref{LE42SBAM3D} in the Appendix), and since $\lambda$ is strictly negative we get $\We\equiv 0$. By Bianchi identity we get $R\equiv 0$, and so $g$ is flat by the nonnegative curvature assumption.  This concludes the proof of Theorem \ref{TH_expanding3dim}.

\begin{rem}
  Theorems \ref{TH_expanding} and \ref{TH_expanding3dim} can be proved also using a $L^1$-Liouville property for the operator $\Delta_{-f}$. In fact, since $Q\geq 0$ it follows from equation \eqref{eq_laplTsquared_RiemRwe_mdimT}     that $\Delta_{-f}|\We|\geq 0$ in a distributional sense. Moreover, the nonnegativity of the Ricci curvature implies that $\ricc_{-f} = \ricc -\nabla^2f=2\ricc-\lambda g \geq 0$. Therefore, since $R\in L^1\pa{M^n}$ implies  $|\We|\in  L^1\pa{e^fd\mu, M^n}$, we can apply \cite[Proposition 4.1]{Chambala}, which asserts that on a complete Riemannian manifold $\pa{M^n, g}$ with $\ricc_{-f}\geq 0$ every positive solution $u$ of $\Delta_{-f}u\geq 0$ with $u\in L^1\pa{e^fd\mu, M^n}$ must be constant. Thus $|\We|$ is constant, and therefore zero from equation \eqref{eq_laplTsquared_RiemRwe_mdimT}.
\end{rem}

\section{Steady case: proof of Theorems \ref{TH_steady} and \ref{TH_steady3dim}}\label{SecSteady}

\subsection{The $n$-dimensional case}
Let $\pa{M^n, g}$ be a complete gradient steady Ricci soliton of dimension $n\geq 3$ with nonnegative sectional curvature and assume that
\begin{equation*}
\liminf_{r\ra+\infty}\,\frac{1}{r}\int_{B_{r}} R = 0.
\end{equation*}
In particular, there exists a sequence $\set{r_i}$, $i\in\mathds{N}$, of positive radii converging to $+\infty$ such that
\begin{equation}\label{decayscalar}
  \lim_{i\ra+\infty}\frac{1}{r_{i}}\int_{B_{r_i}} R = 0.
\end{equation}
From Proposition \ref{prop_intmain}, we have that either the soliton is Ricci flat (hence flat, since $g$ has nonnegative sectional curvature) or
\begin{equation}\label{giani2}
\int_{M} \frac{Q \,\varphi^3 e^{f}}{|\We|}  \leq - 3\int_{M} \langle \nabla |\We|, \nabla \varphi \rangle\varphi^2 e^{f} \leq 3 \int_{M} |\nabla |\We|||\nabla \varphi| \varphi^2 e^{f}
\end{equation}
for every nonnegative smooth cutoff function $\varphi$ with compact support in $M^n$. Since $(M^n, g)$ has nonnegative sectional curvature,  $|\operatorname{Rm}|\leq \alpha R$, for some positive constant $\alpha$.  Hamilton's identity
\[
R+|\nabla f|^{2} = c
\]
implies that both the scalar curvature $R$ and $|\nabla f|^{2}$ are  bounded. Moreover, it follows e.g. from \cite{CMMRGen} that either $R>0$ or the soliton is Ricci flat, thus flat. So from now on we will assume that the scalar curvature is strictly positive. Using Kato's inequality and the fact that $|\nabla \ricc| \geq |\nabla R|/\sqrt{n}$, we get
\[
|\nabla |\We|| \leq |\nabla E| \leq \big( |\nabla \ricc| + \frac{n}{2}|\nabla R| + |\nabla f||\ricc-\frac{1}{2}Rg|\big)e^{-f} \leq c\big(|\nabla \ricc|+|\nabla f|R\big)e^{-f} \leq c\big(|\nabla \ricc|+ R\big)e^{-f}
\]
for some positive constant $c$. Hence, the left-hand side of \eqref{giani2} can be estimate as
\[
\int_{M} |\nabla |\We|||\nabla \varphi| \varphi^2 e^{f}  \leq \int_{M} |\nabla \ricc| |\nabla \varphi| \varphi^{2}+ \int_{M} R |\nabla \varphi| \varphi^{2} \,.
\]
Now we fix an index $i$ and choose $\varphi$ with support in $B_{2r_i}=B_{2r_i}(o)$ for some origin $o\in M^n$ and such that $\varphi\equiv 1$ in $B_{r_i}$, $|\nabla \varphi| \leq \frac{2}{r_i}$ on $M^n$. Then, by \eqref{decayscalar}, the second term of the left-hand side tends to zero as $i\rightarrow +\infty$. By H\"older inequality and the fact that $R>0$, the remaining term can be estimate as
\begin{equation}\label{estintgradric}
\int_{M} |\nabla \ricc| |\nabla \varphi| \varphi^{2} \leq \left(\int_{M}\frac{|\nabla \ricc|^{2}\varphi^{2}}{R}\right)^{1/2}\left(\int_{M}R|\nabla \varphi|^{2}\varphi^{2}\right)^{1/2}
\end{equation}
To conclude the estimate we need the following lemma.

\begin{lemma}
Let $(M^{n},g)$ be a complete, nonflat, gradient steady Ricci soliton of dimension $n\geq 3$ with nonnegative sectional curvature. Then, for every nonnegative cutoff function $\varphi$ with compact support in $M^n$, there exists a positive constant $c$ such that
\[
\int_{M}\frac{|\nabla \ricc|^{2}\varphi^{2}}{R} \leq c \int_{M} \Big( R + R|\nabla \varphi|^{2}\Big) \varphi^{2} .
\]
\end{lemma}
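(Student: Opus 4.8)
The plan is to derive a Bochner--Weitzenb\"ock identity for $\abs{\ricc}^2$ specialized to the steady case $\lambda=0$, and then to integrate it against the weight $\varphi^2/R$, absorbing the resulting $\abs{\nabla\ricc}^2/R$ terms on the left via Cauchy--Schwarz and Young's inequalities. Before starting I would record the structural facts available on such a soliton. Since $(M^n,g)$ is nonflat with nonnegative sectional curvature, the strong maximum principle (together with the dichotomy quoted just above) gives $R>0$ everywhere, so $1/R$ is well defined and bounded on the compact support of $\varphi$; Hamilton's identity $R+\abs{\nabla f}^2=c$ forces both $R$ and $\abs{\nabla f}$ to be bounded; nonnegative sectional curvature yields $\abs{\operatorname{Rm}}\le\alpha R$ and $\abs{\ricc}\le R$; and $\abs{\nabla R}=\abs{2\ricc(\nabla f)}\le 2\abs{\ricc}\abs{\nabla f}\le cR$.

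For the Bochner identity I would start from the soliton evolution equation $R_{ij,kk}=f_kR_{ij,k}+2\lambda R_{ij}-2R_{kt}R_{ikjt}$ recalled above, set $\lambda=0$, contract with $R_{ij}$, and combine with $\tfrac12\Delta\abs{\ricc}^2=\abs{\nabla\ricc}^2+R_{ij}R_{ij,kk}$ to obtain
\[
\abs{\nabla\ricc}^2=\tfrac12\Delta\abs{\ricc}^2-\tfrac12\pair{\nabla\abs{\ricc}^2,\nabla f}+2\operatorname{Rm}(\ricc,\ricc).
\]
Dividing by $R$ and integrating against $\varphi^2$ over $M$ with respect to the Riemannian measure (there are no boundary terms since $\varphi$ is compactly supported) then reduces the lemma to estimating three integrals. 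The curvature term I would handle first: using $\abs{\operatorname{Rm}}\le\alpha R$ and $\abs{\ricc}\le R$ one gets $\operatorname{Rm}(\ricc,\ricc)\le\alpha R\abs{\ricc}^2\le\alpha R^3$, so that $\operatorname{Rm}(\ricc,\ricc)/R\le\alpha R^2\le cR$ by the boundedness of $R$; hence this term is bounded by $c\int_M R\varphi^2$. For the first--order term I would use $\abs{\nabla\abs{\ricc}^2}\le 2\abs{\ricc}\abs{\nabla\ricc}\le 2R\abs{\nabla\ricc}$ together with the bound on $\abs{\nabla f}$, and Young's inequality $\varphi^2\abs{\nabla\ricc}\le\eps\,\varphi^2\abs{\nabla\ricc}^2/R+c_\eps\,\varphi^2R$, producing an absorbable piece plus $c\int_M R\varphi^2$.

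For the Laplacian term I would integrate by parts, writing $\nabla(\varphi^2/R)=2\varphi\nabla\varphi/R-\varphi^2\nabla R/R^2$; the two resulting pieces are controlled respectively by $\abs{\nabla\abs{\ricc}^2}\le 2R\abs{\nabla\ricc}$ paired with $\abs{\nabla\varphi}$, and by $\abs{\nabla\abs{\ricc}^2}\,\abs{\nabla R}\le cR^2\abs{\nabla\ricc}$, again invoking Young's inequality to split off a small multiple of $\int_M\abs{\nabla\ricc}^2\varphi^2/R$ and remainders of the form $c\int_M R\varphi^2+c\int_M R\abs{\nabla\varphi}^2$, matching (up to the harmless power of $\varphi$ on the gradient term) the right-hand side of the lemma. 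Choosing $\eps$ small enough to absorb all the $\int_M\abs{\nabla\ricc}^2\varphi^2/R$ contributions on the left then gives the claim.

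The main obstacle is the cubic curvature term $\operatorname{Rm}(\ricc,\ricc)$: \emph{a priori} it is of order $R^3$, and after dividing by $R$ it is only controlled by $\int_M R^2\varphi^2$, which is not of the form allowed on the right-hand side. The point that rescues the argument is the boundedness of $R$ coming from Hamilton's identity and $R\ge 0$, which downgrades $R^2$ to a constant multiple of $R$; this is exactly where the combination of nonnegative curvature (for $\abs{\operatorname{Rm}}\le\alpha R$ and $\abs{\ricc}\le R$) and the soliton structure is essential. The only other delicate point is the bookkeeping in the integration by parts and the repeated use of Young's inequality with a sufficiently small parameter so that every $\int_M\abs{\nabla\ricc}^2\varphi^2/R$ term can be absorbed on the left, which relies crucially on $R>0$.
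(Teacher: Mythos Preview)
Your argument is correct and yields the desired estimate, but it proceeds along a different line from the paper. The paper does not start from the Bochner formula for $\abs{\ricc}^2$; instead it uses the pointwise algebraic identity $\abs{\nabla\ricc}^2=\tfrac12\abs{\nabla_kR_{ij}-\nabla_jR_{ik}}^2+\nabla_kR_{ij}\nabla_jR_{ik}$, bounds the antisymmetric part directly via the soliton relation $\nabla_kR_{ij}-\nabla_jR_{ik}=R_{kijl}\nabla_l f$ (together with $\abs{\operatorname{Rm}}\le\alpha R$ and $\abs{\nabla f}\le c$), and then integrates the ``symmetric'' term $\nabla_kR_{ij}\nabla_jR_{ik}$ by parts against $\varphi^2/R$, commuting derivatives and using the contracted Bianchi identity $\nabla_kR_{ik}=\tfrac12\nabla_iR$ to reduce everything to $\nabla R$ and curvature terms; absorption via Young's inequality then finishes as in your sketch. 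Your route is somewhat more direct --- one integration by parts instead of two, and no index commutation --- while the paper's decomposition isolates cleanly the piece of $\nabla\ricc$ that is \emph{pointwise} small on a soliton. Both arguments rest on exactly the same structural inputs you list (boundedness of $R$ and $\abs{\nabla f}$, $\abs{\operatorname{Rm}}\le\alpha R$, $\abs{\nabla R}\le cR$), and both in fact produce the right-hand side with $\int_M R\abs{\nabla\varphi}^2$ rather than $\int_M R\abs{\nabla\varphi}^2\varphi^2$, the discrepancy you flag as harmless; it is indeed harmless for the application, since the cutoff used there satisfies $\varphi\le1$.
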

\begin{proof}
First of all, in some local frame, we have
\[
|\nabla \ricc|^{2} = \frac{1}{2} |\nabla_{k}R_{ij}-\nabla_{j}R_{ik}|^{2} + \nabla_{k}R_{ij}\nabla_{j}R_{ik}\,.
\]
From the soliton equation and the commutation rule of covariant derivatives, one has
\[
\nabla_{k}R_{ij}-\nabla_{j}R_{ik} = R_{kijl} \nabla_{l} f\,.
\]
Since $|\operatorname{Rm}|\leq \alpha R$ and $|\nabla f|^{2}\leq c$ for some $\alpha, c>0$, we obtain
\begin{equation}\label{estgradric}
|\nabla \ricc|^{2} \leq c R^{2} + \nabla_{k}R_{ij}\nabla_{j}R_{ik}.
\end{equation}
Hence, to finish the proof we have to estimate the right-hand side. Integrating by parts, commuting indices and using Young's inequality we get
\begin{align*}
\int_{M} \frac{\nabla_{k}R_{ij}\nabla_{j}R_{ik} \varphi^{2}}{R} &= - \int_{M}\frac{R_{ij}\nabla_{k}\nabla_{j}R_{ik} \varphi^{2}}{R} -2\int_{M}\frac{R_{ij}\nabla_{j}R_{ik} \nabla_{k}\varphi \varphi}{R} + \int_{M}\frac{R_{ij}\nabla_{j}R_{ik} \nabla_{k}R \varphi^{2}}{R^{2}} \\
&\leq - \int_{M}\frac{R_{ij}\nabla_{j}\nabla_{k}R_{ik} \varphi^{2}}{R} - \int_{M} \frac{(R_{kjil}R_{ij}R_{kl}+R_{ij}R_{ik}R_{jl})\varphi^{2}}{R} \\
&+ \eps \int_{M}\frac{|\nabla \ricc|^{2}\varphi^{2}}{R} + c(\eps)\int_{M} \Big( R |\nabla \varphi|^{2} + |\nabla R|^{2} \Big) \varphi^{2} ,
\end{align*}
for every $\eps>0$ and some constant $c(\eps)$. Using Bianchi identity, the fact that $|\operatorname{Rm}|\leq \alpha R$ and the well known soliton identity $\nabla R = 2 \ricc(\nabla f)$ (see e.g. \cite{ELNM} or \cite{MRR_GS}), we obtain
\begin{align*}
\int_{M} \frac{\nabla_{k}R_{ij}\nabla_{j}R_{ik} \varphi^{2}}{R} &\leq -\frac{1}{2} \int_{M}\frac{R_{ij}\nabla_{i}\nabla_{j}R \varphi^{2}}{R} + \eps \int_{M}\frac{|\nabla \ricc|^{2}\varphi^{2}}{R} + c(\eps)\int_{M} \Big( R + R |\nabla \varphi|^{2}\Big) \varphi^{2} \\
&= \frac{1}{4} \int_{M}\frac{|\nabla R|^{2} \varphi^{2}}{R} + \eps \int_{M}\frac{|\nabla \ricc|^{2}\varphi^{2}}{R} + c_{1}(\eps)\int_{M} \Big( R + R |\nabla \varphi|^{2}\Big) \varphi^{2} \\
& \leq \eps \int_{M}\frac{|\nabla \ricc|^{2}\varphi^{2}}{R} + c_{2}(\eps)\int_{M} \Big( R + R |\nabla \varphi|^{2}\Big) \varphi^{2} ,
\end{align*}
for every $\eps>0$ and some constant $c_{2}(\eps)$. Choosing $\eps \ll 1$, this estimate and \eqref{estgradric} conclude the proof of the lemma.
\end{proof}
Now we can return to the proof of Theorem \ref{TH_steady}. Using the previous lemma and \eqref{estintgradric}, we obtain
\[
\int_{M} |\nabla \ricc| |\nabla \varphi| \varphi^{2} \leq c\left(  \int_{M} \Big( R + R|\nabla \varphi|^{2}\Big) \varphi^{2}\right)^{1/2} \left(\int_{M}R|\nabla \varphi|^{2}\varphi^{2}\right)^{1/2} \leq \frac{c}{r_i} \int_{B_{2r_i}} R
\]
which, by \eqref{decayscalar}, tends to zero as $i\ra +\infty$. Applying Fatou's lemma, from \eqref{giani2}, we get
\[
\int_{M}  \frac{Q \,\varphi^3 e^{f}}{|\We|} \leq 0.
\]
Hence, Proposition \ref{LE42SBAM} implies that $Q \equiv 0$ on $M$. The equality case implies that the Ricci tensor at every point has at most two distinct eigenvalues $\Lambda=0$ with multiplicity $(n-2)$ and $\Upsilon=\frac12 R$ with multiplicity two. In order to conclude the proof we need the following general result (see Lemma 3.2 in \cite{PetWy}) for constant rank, symmetric, nonnegative tensors. We recall the definition of the $X$-Laplacian $\Delta_X=\Delta - g\pa{X, \cdot}$, for some smooth vector field $X$ (see e.g. \cite{CMMRGen}).
\begin{lemma}\label{LEPet}
  Let $\mathcal{T}$ be a constant rank, symmetric, nonnegative tensor on some tensor bundle. If $\pa{\Delta_X\mathcal{T}}(V, V)\leq 0$ for $V\in \operatorname{Ker}\mathcal{T}$ and $X$ is a vector field, then the kernel of $\mathcal{T}$ is a parallel subbundle.
\end{lemma}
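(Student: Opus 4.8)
The plan is to reduce the statement to a pointwise differential identity and then exploit the sign hypothesis together with the nonnegativity of $\mathcal{T}$. Since $\mathcal{T}$ has constant rank and is symmetric, its kernel $K=\operatorname{Ker}\mathcal{T}$ is a smooth subbundle, so near any point I can work with smooth local sections $V$ of $K$. To prove that $K$ is parallel it suffices to show that $\nabla_Y V\in K$ for every such $V$ and every direction $Y$; because $\mathcal{T}\geq 0$, a vector $u$ lies in $K$ exactly when $\mathcal{T}(u,u)=0$, so the goal becomes $\mathcal{T}(\nabla_Y V,\nabla_Y V)=0$.

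First I would record the first order information. Since $V$ is a section of $K$ we have $\mathcal{T}(V,\cdot)\equiv 0$; differentiating this identity gives $(\nabla_Y\mathcal{T})(V,W)=-\mathcal{T}(\nabla_Y V,W)$ for all $W$, and in particular, evaluating on $V$ and using $\mathcal{T}V=0$, one gets $(\nabla_Y\mathcal{T})(V,V)\equiv 0$ for every $Y$. This already neutralizes the drift contribution, since it forces $(\nabla_X\mathcal{T})(V,V)=0$ and hence $(\Delta_X\mathcal{T})(V,V)=(\Delta\mathcal{T})(V,V)$.

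The heart of the argument is a Bochner-type expansion of the identically vanishing function $\mathcal{T}(V,V)$. Working in a local orthonormal frame $\{e_i\}$ and differentiating twice, one obtains
\[
0=\Delta\big(\mathcal{T}(V,V)\big)=(\Delta\mathcal{T})(V,V)+4\sum_i(\nabla_i\mathcal{T})(\nabla_iV,V)+2\mathcal{T}(\Delta V,V)+2\sum_i\mathcal{T}(\nabla_iV,\nabla_iV).
\]
The term $\mathcal{T}(\Delta V,V)$ vanishes because $\mathcal{T}V=0$, and the first order identity above turns $\sum_i(\nabla_i\mathcal{T})(\nabla_iV,V)$ into $-\sum_i\mathcal{T}(\nabla_iV,\nabla_iV)$. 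Collecting terms yields the clean relation
\[
(\Delta\mathcal{T})(V,V)=2\sum_i\mathcal{T}(\nabla_iV,\nabla_iV),
\]
and combined with the previous paragraph this reads $(\Delta_X\mathcal{T})(V,V)=2\sum_i\mathcal{T}(\nabla_iV,\nabla_iV)$.

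The conclusion is then a squeeze. By hypothesis the left-hand side is $\leq 0$, while by nonnegativity of $\mathcal{T}$ every summand on the right is $\geq 0$; hence each $\mathcal{T}(\nabla_iV,\nabla_iV)=0$, so $\nabla_iV\in K$ for all $i$, i.e. $\nabla_Y V\in K$ for all $Y$, which is precisely parallelism of $K$. The main subtlety to watch is not the calculation but the setup: one must work with \emph{genuine} sections of the kernel bundle—this is exactly where constant rank is essential, guaranteeing smoothness of $K$ and of its local frames—use the symmetry of $\nabla\mathcal{T}$ in its two tensor slots when applying the first order identity, and confirm that the drift term is harmless, the vanishing of $(\nabla_X\mathcal{T})(V,V)$ being what lets $\Delta_X$ replace $\Delta$ here. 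The argument is in essence Hamilton's strong maximum principle for systems, adapted to the weighted Laplacian $\Delta_X$.
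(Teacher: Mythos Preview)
Your argument is correct. The key identity $(\Delta\mathcal{T})(V,V)=2\sum_i\mathcal{T}(\nabla_iV,\nabla_iV)$ for a smooth section $V$ of $\operatorname{Ker}\mathcal{T}$ is derived cleanly, the drift term is handled properly via $(\nabla_X\mathcal{T})(V,V)=0$, and the squeeze at the end is exactly right; your use of the constant rank hypothesis to guarantee a smooth kernel bundle (and hence smooth local sections $V$) is the essential structural point.

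Note, however, that the paper does not supply its own proof of this lemma: it is quoted as Lemma~3.2 of Petersen--Wylie \cite{PetWy} and used as a black box. Your write-up is precisely the argument given there (specialized to the drifted Laplacian $\Delta_X$), so there is nothing to compare in terms of strategy---you have reproduced the standard proof.
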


Now let $e_1, \ldots, e_n$ be a local orthonormal frame such that $\ricc\pa{e_1, e_1}=\ricc\pa{e_2, e_2}=\frac12 R$ and $\ricc\pa{e_k, \cdot}=0$ for every $k=3, \ldots, n$. In particular, the only nonzero sectional curvature is $\sigma_{12}$, where $\sigma_{ij}$ is the sectional curvature defined by the two-plane spanned by $e_{i}$ and $e_{j}$.
Then, by the well known soliton identity (see e.g. \cite{ELNM} or \cite{MRR_GS})
\[
\Delta_{\nabla f}R_{ij} = 2\lambda R_{ij}-2R_{iljt}R_{lt}= -2R_{iljt}R_{lt},
\]
we see that, for every fixed $k=3, \ldots, n$, $e_k\in \operatorname{Ker}\ricc$ and
\[
\pa{\Delta_{\nabla f}\ricc}\pa{e_k, e_k} = -2 \sum_i\operatorname{Rm}\pa{e_k, e_i, e_k, e_i}\ricc\pa{e_i, e_i} = -R\pa{\sigma_{1k}+\sigma_{2k}}= 0.
\]
Moreover,  since $g$ has nonnegative sectional curvature, $\frac12 Rg-\ricc$ is nonnegative, and a simple computation shows that
\[
\pa{\Delta_{\nabla f}\pa{\frac12 Rg-\ricc}}\pa{e_1, e_1} = \pa{\Delta_{\nabla f}\pa{\frac12 Rg-\ricc}}\pa{e_2, e_2} = 0.
\]
 Now  Lemma \ref{LEPet} applies, and by de Rham Decomposition Theorem (see for instance \cite{KobNom1}, Chapter 1, Section 6) the metric splits and Theorem \ref{TH_steady} follows since, as we said in the Introduction, the cigar soliton $\Sigma^2$ is the only complete two-dimensional steady soliton with positive curvature.
 \begin{rem}
   Note that, under our assumptions, $f$ in general is not proper, thus one cannot exploit the argument used in the expanding case involving a cutoff function depending on the potential $f$.
 \end{rem}

\subsection{The $3$-dimensional case}

The proof of Theorem \ref{TH_steady3dim} in dimension three follows the lines of the higher dimensional case. First of all, by Chen \cite{chen} we have that $g$ must have nonnegative sectional curvature, and since $R+\abs{\nabla f}^2=c$, $g$ has also bounded curvature. From Hamilton's strong maximum principle (see e.g. \cite{ChowLuNi}) we deduce that $\pa{M^3, g}$ is either flat, or it splits as a product $\erre \times \Sigma^2$ (where $\Sigma^2$ is  again the cigar steady soliton) or it has strictly positive sectional curvature. In the latter case, following the proof of  Theorem \ref{TH_steady} we obtain
\[
\int_{M} \frac{Q \varphi^3 e^{f}}{|\We|}\leq 0.
\]
Now, we use the fact that under our assumptions $Q$ is nonnegative (see Proposition \ref{LE42SBAM3D} in the Appendix), and  we get $Q\equiv 0$. The equality case in Proposition \ref{LE42SBAM3D} implies that the Ricci curvature has a zero eigenvalue, a contradiction. This concludes the proof of Theorem \ref{TH_expanding3dim}.

\subsection{Proof of Corollary \ref{corquadratic}}
Corollary \ref{corquadratic} is a direct consequence of the following
\begin{lemma}[Lemma 4.3 in \cite{deruelle}]\label{lemma4.3deruelle}
  Let $\pa{M^n, g}$ be a complete gradient steady Ricci soliton with nonnegative Ricci curvature. Then, for every $o \in M^n$ and every $r>0$,
  \[
  \int_{B_r(o)} R \leq n\sqrt{c}\frac{\operatorname{Vol}\pa{B_r(o)}}{r},
    \]
    where $c$ is the constant in Hamilton's identity $R+\abs{\nabla f}^2=c$.
\end{lemma}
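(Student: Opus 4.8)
The plan is to convert the integral of the scalar curvature into a boundary integral using the trace of the soliton equation, bound that boundary term by Hamilton's identity and the area of the geodesic sphere, and then invoke Bishop--Gromov comparison to replace the area by the volume.

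First I would record the two elementary consequences of the steady structure that drive everything. Tracing the soliton equation \eqref{sol} with $\lambda=0$ gives $\Delta f = -R$, while Hamilton's identity $R+\abs{\nabla f}^2=c$, combined with $R\geq 0$ (which holds since $\ricc\geq 0$), forces $\abs{\nabla f}^2 = c-R\leq c$, that is $\abs{\nabla f}\leq\sqrt{c}$ everywhere on $M^n$. This is the only place where the soliton hypothesis and the constant $c$ enter.

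Next I would integrate $R=-\diver\pa{\nabla f}$ over the ball $B_r(o)$ and apply the divergence theorem, which yields
\[
\int_{B_r(o)} R \,=\, -\int_{\partial B_r(o)} \pair{\nabla f,\nu}\, d\mathcal{H}^{n-1} \,\leq\, \sqrt{c}\,\,\mathcal{H}^{n-1}\pa{\partial B_r(o)},
\]
where $\nu$ is the outward unit normal. The final step is purely comparison-geometric: writing $A(s)=\mathcal{H}^{n-1}\pa{\partial B_s(o)}$ and $V(s)=\operatorname{Vol}\pa{B_s(o)}$, the Bishop--Gromov theorem (applicable since $\ricc\geq 0$) gives that $s\mapsto A(s)/s^{n-1}$ is nonincreasing, so $A(s)\geq A(r)\,s^{n-1}/r^{n-1}$ for $s\leq r$. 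Integrating this against $V(r)=\int_0^r A(s)\,ds$ produces $V(r)\geq\tfrac{r}{n}A(r)$, i.e. $A(r)\leq\tfrac{n}{r}V(r)$, and substituting into the previous display gives precisely $\int_{B_r(o)} R\leq n\sqrt{c}\,V(r)/r$.

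I expect the only genuine subtlety to be the rigorous use of the divergence theorem on the metric ball, since $\partial B_r(o)$ need not be smooth where it meets the cut locus of $o$. This is dealt with in the standard way: $\partial B_r(o)$ is rectifiable with finite $\mathcal{H}^{n-1}$ measure (again by Bishop--Gromov), the cut locus is closed of measure zero, and $\nabla f$ is smooth across it, so the boundary identity holds for almost every $r$. Since $\int_{B_r(o)}R$ is continuous in $r$ (by dominated convergence, as metric balls have volume-continuous radii) and $V(r)/r$ is continuous as well, the estimate for a.e. $r$ automatically propagates to every $r>0$, completing the argument.
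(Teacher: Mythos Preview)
Your argument is correct and follows essentially the same route as the paper: integrate $R=-\Delta f$ over $B_r(o)$, bound the boundary term by $\sqrt{c}\,A(\partial B_r(o))$ via Hamilton's identity, and then use the Bishop--Gromov inequality $rA(\partial B_r(o))\leq n\operatorname{Vol}(B_r(o))$. The paper's proof is terser---it simply quotes the inequality $rA/V\leq n$ without deriving it and does not discuss the cut-locus regularity issue you address---but the strategy is identical.
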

\begin{proof}
  Integrating the equation $R+\Delta f =0$ one has
  \begin{align*}
    \int_{B_r(o)} R &= - \int_{B_r(o)} \Delta f \leq \int_{\partial B_r(o)}\abs{\nabla f} \leq C A\pa{\partial B_r(o)},
  \end{align*}
  where $A\pa{\partial B_r(o)}$ is the $(n-1)$-dimensional volume of the geodesic sphere $\partial B_r(o)$. Now, since $\pa{M^n, g}$ has nonnegative Ricci curvature, by Bishop-Gromov theorem (see for instance \cite{ChowLuNi}) for every $o\in M$ and every $r>0$ one has
  \[
  \frac{rA\pa{\partial B_r(o)}}{\operatorname{Vol}\pa{B_r(o)}} \leq n,
  \]
  which implies the thesis.

\end{proof}

\section{Appendix}
We provide here the proof of some new algebraic curvature estimates needed for the proof of the main theorems. First we recall the following lemma (see \cite[Proposition 3.1]{CatinoPre})

\begin{lemma}\label{p-est}
Let $(M^{n},g)$ be a Riemannian manifold of dimension $n\geq 3$ with nonnegative sectional curvature. Then, the following estimate holds
\[
R_{ikjl} T_{ij} T_{kl} \leq \frac{n-2}{2n} R|T|^{2} ,
\]
with equality if and only if the Ricci tensor has at most two distinct eigenvalues, $\Lambda$ of multiplicity $(n-2)$ and $\Upsilon$ of multiplicity two.
\end{lemma}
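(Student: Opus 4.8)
The plan is to turn the estimate into a purely pointwise algebraic inequality for the eigenvalues of the traceless Ricci tensor. At a fixed point I would choose a local orthonormal frame $e_1,\dots,e_n$ diagonalizing $\ricc$, so that $R_{ij}=\lambda_i\delta_{ij}$ and $T_{ij}=\mu_i\delta_{ij}$ with $\mu_i=\lambda_i-\tfrac{R}{n}$ and $\sum_i\mu_i=0$. Denoting by $\sigma_{ik}=R_{ikik}\geq 0$ the sectional curvature of the coordinate plane $\mathrm{span}(e_i,e_k)$, the diagonal form of $T$ kills every component of the curvature tensor except the sectional ones, so that
\[
R_{ikjl}T_{ij}T_{kl}=\sum_{i,k}R_{ikik}\mu_i\mu_k=\sum_{i\neq k}\sigma_{ik}\mu_i\mu_k=2\sum_{i<k}\sigma_{ik}\mu_i\mu_k.
\]
Since $\lambda_i=\sum_{k\neq i}\sigma_{ik}$ and hence $R=\sum_i\lambda_i=2\sum_{i<k}\sigma_{ik}$, the quantity to be controlled factors pairwise:
\[
\tfrac{n-2}{2n}R\,|T|^2-R_{ikjl}T_{ij}T_{kl}=\sum_{i<k}\sigma_{ik}\Big[\tfrac{n-2}{n}|T|^2-2\mu_i\mu_k\Big].
\]

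The heart of the matter is then the following elementary claim: for every vector $(\mu_1,\dots,\mu_n)$ with $\sum_i\mu_i=0$ and every fixed pair $i\neq k$,
\[
\mu_i\mu_k\leq\tfrac{n-2}{2n}\,|T|^2,\qquad |T|^2=\sum_j\mu_j^2.
\]
Granting this, each bracket above is nonnegative; since moreover $\sigma_{ik}\geq 0$, the entire sum is nonnegative, which is exactly the asserted inequality. Observe that only the nonnegativity of the \emph{coordinate} sectional curvatures is used.

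To prove the claim I would set $p=\mu_i+\mu_k$ and $q=\mu_i\mu_k$; the case $q\leq 0$ is trivial, so assume $q\geq 0$, in which case $p^2\geq 4q$. From $\sum_{l\neq i,k}\mu_l=-p$ and Cauchy--Schwarz one gets $\sum_{l\neq i,k}\mu_l^2\geq\tfrac{p^2}{n-2}$, whence $|T|^2=(p^2-2q)+\sum_{l\neq i,k}\mu_l^2\geq\tfrac{n-1}{n-2}p^2-2q\geq\tfrac{4(n-1)}{n-2}q-2q=\tfrac{2n}{n-2}q$, which rearranges to the claim. Tracking equality, the gap vanishes precisely when every pair with $\sigma_{ik}>0$ saturates the claim, i.e. $\mu_i=\mu_k$ (from $p^2=4q$) and all remaining $\mu_l$ coincide (from the Cauchy--Schwarz equality). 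If some $\sigma_{ik}>0$ this forces the spectrum of $T$ to consist of exactly two values, $\mu_i=\mu_k$ with multiplicity two and $\mu_l=-\tfrac{2\mu_i}{n-2}$ with multiplicity $n-2$, i.e. $\ricc$ has the two eigenvalues $\Upsilon$ (multiplicity two) and $\Lambda$ (multiplicity $n-2$); if all $\sigma_{ik}=0$ then $\ricc\equiv 0$ and $T=0$, the degenerate case. The converse is immediate in the situation relevant to the steady case, where $\Lambda=0$: then $\lambda_l=\sum_k\sigma_{lk}=0$ together with $\sigma_{lk}\geq 0$ forces all sectional curvatures involving a $\Lambda$-direction to vanish, leaving a single nonzero $\sigma$ which realizes the extremal profile.

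The main obstacle is finding this factorization rather than a cruder bound. The obvious move --- completing squares to write $R_{ikjl}T_{ij}T_{kl}=\sum_i\lambda_i\mu_i^2-\sum_{i<k}\sigma_{ik}(\mu_i-\mu_k)^2$ and discarding the nonpositive last term --- only yields $R_{ikjl}T_{ij}T_{kl}\leq\sum_i\lambda_i\mu_i^2$, which is genuinely too weak since $\sum_i\lambda_i\mu_i^2$ may exceed $\tfrac{n-2}{2n}R|T|^2$. What rescues the argument is keeping the whole expression and using the coupling $R=2\sum_{i<k}\sigma_{ik}$ to reduce everything to the sharp spectral inequality $\mu_i\mu_k\leq\tfrac{n-2}{2n}|T|^2$; producing the exact constant $\tfrac{n-2}{2n}$ and matching it to the two-eigenvalue equality profile is the delicate point.
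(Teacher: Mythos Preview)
Your argument is correct and essentially identical to the paper's: both diagonalize $T$, factor the difference as $\sum_{i<k}\sigma_{ik}\bigl[\tfrac{n-2}{n}|T|^2-2\mu_i\mu_k\bigr]$, and then bound each bracket using the trace-free constraint together with Cauchy--Schwarz on the remaining $n-2$ eigenvalues. The only cosmetic difference is that the paper packages the last step as the single inequality $\tfrac{n-2}{n}|T|^2-2\mu_i\mu_k\geq\tfrac{n-1}{n}(\mu_i-\mu_k)^2$, whereas you split it into the Cauchy--Schwarz bound $|T|^2\geq\tfrac{n-1}{n-2}p^2-2q$ followed by $p^2\geq 4q$; the equality analysis and its minor looseness regarding the converse direction are the same in both.
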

\begin{proof} Let $\{e_{i}\}$, $i=1,\ldots, n$, be the eigenvectors of $T$ and let $\lambda_{i}$ be the corresponding eigenvalues. Moreover, let $\sigma_{ij}$ be the sectional curvature defined by the two-plane spanned by $e_{i}$ and $e_{j}$. We want to prove that the quantity
\[
R_{ikjl} T_{ij} T_{kl}  - \frac{n-2}{2n} R|T|^{2} \,=\, \sum_{i,j=1}^{n} \lambda_{i}\lambda_{j} \sigma_{ij} - \frac{n-2}{2n} R \sum_{k=1}^{n} \lambda_{k}^{2}
\]
is nonpositive if $\sigma_{ij} \geq 0$ for all $i,j=1,\ldots,n$. The scalar curvature can be written as
\[
R \,=\, R_{ijij} \,=\, \sum_{i,j=1}^{n} \sigma_{ij} \,=\, 2 \sum_{i<j}\sigma_{ij}\,.
\]
Hence, one has the following
\begin{eqnarray*}
\sum_{i,j=1}^{n} \lambda_{i}\lambda_{j} \sigma_{ij} - \frac{n-2}{2n}  R \sum_{k=1}^{n} \lambda_{k}^{2} &=& 2 \sum_{i<j}\lambda_{i}\lambda_{j} \sigma_{ij} - \frac{n-2}{n}  \sum_{i<j} \sigma_{ij} \sum_{k=1}^{n} \lambda_{k}^{2} \\
&=& \sum_{i<j} \Big(2\lambda_{i}\lambda_{j} -\frac{n-2}{n}\sum_{k=1}^{n} \lambda_{k}^{2} \Big) \sigma_{ij}\,.
\end{eqnarray*}
On the other hand, one has
\[
\sum_{k=1}^{n} \lambda_{k}^{2} \,=\, \lambda_{i}^{2} + \lambda_{j}^{2} + \sum_{k\neq i,j}\lambda_{k}^{2}\,.
\]
Moreover, using the Cauchy-Schwarz inequality and the fact that $\sum_{k=1}^{n} \lambda_{k} =0$, we obtain
\[
\sum_{k\neq i,j}\lambda_{k}^{2} \,\geq \, \frac{1}{n-2} \Big( \sum_{k\neq i,j}\lambda_{k}\Big)^{2} \,=\, \frac{1}{n-2} \big( \lambda_{i} + \lambda_{j} \big)^{2} \,,
\]
with equality if and only if $\lambda_{k}=\lambda_{k'}$ for every $k,k'\neq i,j$. Hence, the following estimate holds
$$
\sum_{k=1}^{n} \lambda_{k}^{2} \,\geq \, \frac{n-1}{n-2} \big(\lambda_{i}^{2} + \lambda_{j}^{2} \big) + \frac{2}{n-2} \lambda_{i} \lambda_{j}\,.
$$
Using this, since $\sigma_{ij} \geq 0$, it follows that
\begin{eqnarray*}
\sum_{i,j=1}^{n} \lambda_{i}\lambda_{j} \sigma_{ij} - \frac{n-2}{2n} R \sum_{k=1}^{n} \lambda_{k}^{2} &\leq& \frac{n-1}{n}\sum_{i<j} \big(2\lambda_{i}\lambda_{j} -\big(\lambda_{i}^{2} + \lambda_{j}^{2} \big) \Big) \sigma_{ij} \\
&=&- \frac{n-1}{n}\sum_{i<j} (\lambda_{i}-\lambda_{j})^{2} \sigma_{ij}\, \leq \, 0 \,.
\end{eqnarray*}
This concludes the proof of the estimate.

In the equality case we have that $\lambda_{k}=\lambda_{k'}$ for every $k,k'\neq i,j$ and for $i<j$
$$
(\lambda_{i}-\lambda_{j})^{2}\sigma_{ij} \,=\, 0\,.
$$
Hence, either, for every $i<j$, $\sigma_{ij}=0$ (and $\operatorname{Rm}=0$) or there exists $i<j$ such that $\sigma_{ij}>0$. In this second case, without loss of generality we can assume that $\sigma_{12}>0$ and we have that $\lambda_{1}=\lambda_{2}$, $\lambda_{3}=\ldots=\lambda_{n}$. Since $\ricc= T+ \frac{1}{n}Rg$, the conclusion on the Ricci tensor follows and this concludes the proof of the lemma.
\end{proof}

\begin{proposition}\label{LE42SBAM}
 Let $\pa{M^n, g}$ be a Riemannian manifold of dimension $n\geq 3$ with nonnegative sectional curvature. Then
 \[
 P:=\frac{(n-2)^3}{4n^2}R^3-2R_{ikjt}T_{ij}T_{kt}-\frac{\pa{n-2}\pa{n-4}}{2n}R\abs{T}^2\geq 0,
 \]
 with equality  if and only if the Ricci tensor has at most two distinct  eigenvalues, $\Lambda=0$ with multiplicity $(n-2)$ and $\Upsilon=\frac{1}{2}R$ with multiplicity two.
\end{proposition}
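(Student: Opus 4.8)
The plan is to reduce \eqref{LE42SBAM} to a pointwise algebraic inequality in the eigenvalues of the Ricci tensor, and then to combine the preceding Lemma \ref{p-est} with an elementary scalar bound on $\abs{\ricc}^2$. First I would diagonalize at an arbitrary point: fix an orthonormal frame $\{e_i\}$ of eigenvectors of $\ricc$ (equivalently of $T$), let $\rho_i$ be the Ricci eigenvalues and $\lambda_i=\rho_i-R/n$ the traceless ones (so $\sum_i\lambda_i=0$), and write $\sigma_{ij}$ for the sectional curvature of the plane $e_i\wedge e_j$. As in the proof of Lemma \ref{p-est} one has $R_{ikjt}T_{ij}T_{kt}=\sum_{i,j}\sigma_{ij}\lambda_i\lambda_j$, $\abs{T}^2=\sum_i\lambda_i^2$, and $R=\sum_{i,j}\sigma_{ij}\geq 0$.

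Next I would apply Lemma \ref{p-est} directly to the middle term: since the sectional curvature is nonnegative, $-2R_{ikjt}T_{ij}T_{kt}\geq-\tfrac{n-2}{n}R\abs{T}^2$. Substituting into $P$ and collecting the two contributions proportional to $R\abs{T}^2$ yields $P\geq\tfrac{(n-2)^3}{4n^2}R^3-\tfrac{(n-2)^2}{2n}R\abs{T}^2$, and using $\abs{T}^2=\abs{\ricc}^2-R^2/n$ this rearranges into the clean form
\[
P\ \geq\ \frac{(n-2)^2 R}{2n}\pa{\frac{R^2}{2}-\abs{\ricc}^2}.
\]
The remaining ingredient is the pointwise bound $\abs{\ricc}^2\leq\tfrac12 R^2$, which I would derive from the nonnegativity of the sectional curvature: each Ricci eigenvalue satisfies $\rho_i=\sum_{j\neq i}\sigma_{ij}\leq\sum_{k<l}\sigma_{kl}=\tfrac12 R$, being a partial sum of nonnegative quantities whose total is $\tfrac12 R$. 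Hence $\abs{\ricc}^2=\sum_i\rho_i^2\leq\pa{\max_i\rho_i}\sum_i\rho_i\leq\tfrac{R}{2}\cdot R=\tfrac{R^2}{2}$. Since $R\geq 0$, the displayed lower bound for $P$ is nonnegative, which proves $P\geq 0$.

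For the equality statement I would trace both inequalities backwards. If $R=0$, nonnegative sectional curvature forces $\operatorname{Rm}=0$ and all $\rho_i=0$, which is the claimed configuration with $\Upsilon=\tfrac12 R=0$. If $R>0$, then $P=0$ forces equality both in Lemma \ref{p-est} and in the chain $\sum_i\rho_i^2\leq(\max_i\rho_i)R\leq\tfrac{R^2}{2}$; the latter requires each $\rho_i\in\{0,R/2\}$, and since $\sum_i\rho_i=R$ exactly two eigenvalues equal $R/2$ while the remaining $n-2$ vanish. This is precisely the structure $\Lambda=0$ of multiplicity $n-2$ and $\Upsilon=\tfrac12 R$ of multiplicity two. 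Conversely, this configuration has exactly two distinct eigenvalues with multiplicities $n-2$ and $2$, so it saturates Lemma \ref{p-est}, and $\abs{\ricc}^2=2(R/2)^2=R^2/2$, giving $P=0$.

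The computations (the eigenframe identities and the two-term collection) are routine; the part requiring genuine care is the \emph{matching of the equality cases}. One must check that the two inequalities used to produce the lower bound for $P$ have compatible equality conditions and that, when both are saturated, they pin down exactly the same eigenvalue configuration claimed in the statement — the non-obvious point being that the crude application of Lemma \ref{p-est} already suffices precisely because the nonnegativity of the sectional curvature secretly supplies the bound $\abs{\ricc}^2\leq\tfrac12 R^2$, whose equality case coincides with that of Lemma \ref{p-est}.
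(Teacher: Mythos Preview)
Your argument is correct and follows the same route as the paper: apply Lemma \ref{p-est} to bound $-2R_{ikjt}T_{ij}T_{kt}$ from below, then invoke the scalar inequality $\abs{\ricc}^2\leq\tfrac12 R^2$ coming from nonnegative sectional curvature. The only difference is in the equality analysis: the paper uses the two-eigenvalue structure supplied by Lemma \ref{p-est} together with $\abs{\ricc}^2=\tfrac12 R^2$ to obtain a quadratic $\Lambda^{2}=\tfrac{2}{n}R\Lambda$ and then rules out the spurious root $\Lambda=\tfrac{2}{n}R$, $\Upsilon=-\tfrac{n-2}{2n}R$ by the sign constraint from nonnegative curvature, whereas your direct reading of the equality chain $\sum_i\rho_i^2=(\max_i\rho_i)R=\tfrac{R^2}{2}$ lands on $\rho_i\in\{0,R/2\}$ without that detour.
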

\begin{proof}
  From Lemma \ref{p-est} we deduce that
  \[
  -2R_{ikjl} T_{ij} T_{kl} \geq -\frac{n-2}{n} R|T|^{2};
  \]
  using the previous estimate in the definition of $P$ we get
  \begin{equation}\label{estSbam}
  P \geq \frac{\pa{n-2}^3}{4n^2}R^3-\frac{\pa{n-2}^2}{2n}R\abs{T}^2=\frac{\pa{n-2}^2}{2n}R\sq{\frac{\pa{n-2}}{2n}R^2-\abs{T}^2}.
  \end{equation}
  By the nonnegativity assumption on the sectional curvature we know that
  \[
  \abs{\ricc}^2\leq \frac{1}{2}R^2,
  \]
  which implies
  \[
  \abs{T}^2 \leq \frac{n-2}{2n}R^2.
  \]
  Inserting the previous relation in \eqref{estSbam} we get $P\geq 0$.

If $P=0$ at a point, we have $|\ricc|^{2}=\frac{1}{2}R^{2}$ and the equality case in Lemma \ref{p-est}. Hence, the Ricci tensor has at most two distinct eigenvalues, $\Lambda$ of multiplicity $(n-2)$ and $\Upsilon$ of multiplicity two. In particular $R = (n-2)\Lambda + 2 \Upsilon$. Combining this with the identity
\[
|\ricc|^{2}=(n-2)\Lambda^{2}+2\Upsilon^{2} = \frac{1}{2}R^{2}
\]
we obtain
\[
\Lambda^{2} = \frac{2}{n} R \Lambda .
\]
Now either $\Lambda=0$ and $\Upsilon=\frac{1}{2}R$ or $\Lambda =\frac{2}{n}R$ and $\Upsilon = -\frac{n-2}{2n} R$. But, since $g$ has nonnegative sectional curvature, this second case implies $R=0$ and so $g$ is flat. In both cases we have the splitting result and this concludes the proof of the proposition.
\end{proof}
For the three dimensional case we  need the following  algebraic lemma.
\begin{lemma}\label{dario}
For $x,y,z\geq0$ let
$$P(x,y,z)=5(x^3+y^3+z^3)-5(x^2y+xy^2+x^2z+xz^2+y^2z+yz^2)+18xyz.$$
Then $P(x,y,z)\geq3xyz\geq0$ and $P(x,y,z)=0$ if and only if $x=0$
and $y=z$, or $y=0$ and $x=z$, or $z=0$ and $x=y$.
\end{lemma}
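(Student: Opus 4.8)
The plan is to recognize $P$, after subtracting $3xyz$, as a positive multiple of the left-hand side of Schur's inequality. Concretely, I would first verify the algebraic identity
\[
P(x,y,z)-3xyz = 5\bigl[\,x(x-y)(x-z)+y(y-x)(y-z)+z(z-x)(z-y)\,\bigr],
\]
which is a routine expansion: the bracketed quantity equals $(x^3+y^3+z^3)-(x^2y+xy^2+x^2z+xz^2+y^2z+yz^2)+3xyz$, so multiplying by $5$ and adding back $3xyz$ reproduces $P$ exactly. Denote by $S(x,y,z)$ the bracketed Schur expression.

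Second, I would establish $S\geq 0$ for $x,y,z\geq 0$. Since $S$ is symmetric, I may assume without loss of generality that $x\geq y\geq z\geq 0$, and grouping the first two summands yields the factorization
\[
S=(x-y)^2(x+y-z)+z(x-z)(y-z).
\]
Under the chosen ordering every factor on the right is nonnegative, hence $S\geq 0$. Combined with the trivial bound $3xyz\geq 0$, this gives $P=5S+3xyz\geq 3xyz\geq 0$, which is the asserted inequality.

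Third, for the equality case I would use that $P=0$ forces both nonnegative summands $5S$ and $3xyz$ to vanish, that is $xyz=0$ and $S=0$ simultaneously. On the locus $xyz=0$ one of the variables vanishes; by symmetry take $z=0$, where $S$ collapses to $x^2(x-y)+y^2(y-x)=(x-y)^2(x+y)$. This vanishes precisely when $x=y$ (the alternative $x+y=0$ forces $x=y=0$, already included). Permuting the roles of the three variables then produces exactly the three stated configurations: $z=0$ with $x=y$, $y=0$ with $x=z$, and $x=0$ with $y=z$.

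The main obstacle is really only the recognition step; once $P$ is seen to be $5S+3xyz$ with $S$ in Schur's form, both the inequality and its sharpness follow with little effort. The single point demanding care is the equality discussion: one must observe that $P=0$ forces $xyz=0$ (so the analysis never takes place in the interior $xyz\neq0$, where $S>0$ as soon as the variables are distinct or two coincide while the third is positive), and then check on each coordinate hyperplane that the residual factor $(x-y)^2(x+y)$ and its permutations vanish only along the three claimed rays, with the origin appearing as the common degenerate endpoint.
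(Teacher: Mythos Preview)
Your proof is correct and follows essentially the same route as the paper's: both write $P=5S+3xyz$ with $S$ the Schur expression, establish $S\geq 0$ by ordering the variables and regrouping into manifestly nonnegative terms, and handle equality by first deducing $xyz=0$ and then factoring on a coordinate hyperplane. The only cosmetic differences are the choice of ordering ($x\geq y\geq z$ versus $x\leq y\leq z$) and which pair of summands is combined, and that you explicitly name Schur's inequality while the paper does not.
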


\begin{proof} It is easy to see that
\begin{equation}\label{P1}
P(x,y,z)=5x(x-z)(x-y)+5y(y-z)(y-x)+5z(z-x)(z-y)+3xyz.
\end{equation}
Since $P(x,y,z)$ is symmetric in $(x,y,z)$, i.e. it is invariant
under any permutation of the variables $x,y,z$, we can assume
without loss of generality that $0\leq x\leq y\leq z$. Hence
\begin{equation}\label{P2}
\begin{array}{rcl}
\bar{P}(x,y,z)&:=&5x(x-z)(x-y)+5y(y-z)(y-x)+5z(z-x)(z-y)\\
&=&5x(x-z)(x-y)+5(z-y)^2(z+y-x)\geq0.
\end{array}
\end{equation}
From \eqref{P1} and \eqref{P2} we conclude that for every
$x,y,z\geq0$
\begin{equation}\label{P3}
P(x,y,z)=\bar{P}(x,y,z)+3xyz\geq3xyz\geq0.
\end{equation}
If $P(x,y,z)=0$, by \eqref{P3} we have that $xyz=0$. If $x=0$ then
$$0=P(0,y,z)=5(y^3+z^3-y^2z-yz^2)=5(y-z)^2(y+z),$$ thus we have
$y=z$. The cases when $y=0$ or $z=0$ can be obtained by permutation
of the variables $x,y,z$.
\end{proof}

\begin{proposition}\label{LE42SBAM3D}
 Let $\pa{M^3, g}$ be a three dimensional Riemannian manifold with nonnegative Ricci curvature. Then
 \[
 P:= 4R_{ij}R_{jk}R_{ki}-\frac{7}{2}R\abs{\ricc}^2+\frac{3}{4}R^3\geq 0,
 \]
 with equality   if and only if the Ricci tensor has at most two distinct  eigenvalues, $\Lambda=0$ and $\Upsilon=\frac{1}{2}R$ with multiplicity two.
\end{proposition}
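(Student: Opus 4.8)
The plan is to reduce the claim to the purely algebraic inequality of Lemma \ref{dario} by diagonalizing the Ricci tensor pointwise. At a fixed point $p\in M^3$, since $\ricc$ is a symmetric $(0,2)$-tensor we may choose an orthonormal eigenbasis, with eigenvalues $a,b,c$; the nonnegativity assumption on the Ricci curvature gives $a,b,c\geq 0$. In this basis the three scalar invariants appearing in $P$ are the power sums of $a,b,c$, namely $R=a+b+c$, $\abs{\ricc}^2=a^2+b^2+c^2$ and $R_{ij}R_{jk}R_{ki}=\operatorname{tr}(\ricc^3)=a^3+b^3+c^3$.

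First I would substitute these expressions into the definition of $P$ and simplify. The cleanest route is to pass to the elementary symmetric functions $s_1=a+b+c$, $s_2=ab+bc+ca$, $s_3=abc$, and to use the identities $a^2+b^2+c^2=s_1^2-2s_2$ and $a^3+b^3+c^3=s_1^3-3s_1s_2+3s_3$. A direct computation then gives
\[
P=\tfrac{5}{4}s_1^3-5s_1s_2+12s_3=\tfrac{1}{4}\bigl(5s_1^3-20s_1s_2+48s_3\bigr).
\]
On the other hand, rewriting the polynomial of Lemma \ref{dario} in the same variables yields $P(a,b,c)=5s_1^3-20s_1s_2+48s_3$, so that in fact $P=\tfrac14 P(a,b,c)$. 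This identity, which is the only real content of the proof, expresses the cubic Ricci invariant of the proposition exactly as a quarter of the symmetric cubic studied in the lemma.

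With this in hand the inequality is immediate: since $a,b,c\geq 0$, Lemma \ref{dario} gives $P(a,b,c)\geq 3abc\geq 0$, hence $P\geq 0$ at every point of $M^3$. For the equality case, $P=0$ forces $P(a,b,c)=0$, and the characterization in Lemma \ref{dario} states that this occurs precisely when one eigenvalue vanishes and the remaining two coincide, say $a=0$ and $b=c=:\Upsilon$. Then $R=2\Upsilon$, so $\Upsilon=\tfrac12 R$, and the Ricci tensor has exactly the two eigenvalues $\Lambda=0$ and $\Upsilon=\tfrac12 R$ of multiplicity two, as claimed.

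I do not expect any serious obstacle here: once the identity $P=\tfrac14 P(a,b,c)$ is verified, Lemma \ref{dario} does all the work, both for positivity and for the rigidity. The one point worth emphasizing is that the hypothesis $a,b,c\geq 0$ --- equivalently, nonnegativity of the \emph{Ricci} curvature, rather than of the full sectional curvature as in Proposition \ref{LE42SBAM} --- is exactly what is needed to invoke Lemma \ref{dario}, which reflects the special algebraic structure of the curvature tensor in dimension three (where $\operatorname{Rm}$ is determined by $\ricc$).
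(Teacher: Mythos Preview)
Your proof is correct and follows essentially the same approach as the paper: diagonalize $\ricc$, express $P$ as a symmetric polynomial in the eigenvalues, verify the identity $4P=P(a,b,c)$, and invoke Lemma \ref{dario} for both the inequality and the equality case. The only cosmetic difference is that you route the algebra through the elementary symmetric functions $s_1,s_2,s_3$, whereas the paper expands directly in the power sums; the resulting identity $4P=5s_1^{3}-20s_1 s_2+48s_3=P(a,b,c)$ is the same.
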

\begin{proof}
  Let $e_1, e_2, e_3$ be a local orthonormal frame such that $\ricc(e_i, \cdot)=\mu_ie_i$ for $i=1, 2, 3$. Then
  \begin{align*}
  4P&=\pa{\mu_1^3+\mu_2^3+\mu_3^3}-14\pa{\mu_1+\mu_2+\mu_3}\pa{\mu_1^2+\mu_2^2+\mu_3^2}+3\pa{\mu_1+\mu_2+\mu_3}^3 \\ &= 5(\mu_1^3+\mu_2^3+\mu_3^3)-5(\mu_1^2\mu_2+\mu_1\mu_2^2+\mu_1^2\mu_3+\mu_1\mu_3^2+\mu_2^2\mu_3+\mu_2\mu_3^2)+18\mu_1\mu_2\mu_3.
  \end{align*}
  Now the proposition follows from Lemma \ref{dario}.
\end{proof}

\begin{ackn}
 The second author would like to thank R. Bertani,  R. Locatelli and M. Porzio for some useful remarks and suggestions.
\end{ackn}

\bibliographystyle{plain}

\bibliography{BiblioCodazzism}
\end{document}